  \DeclareSymbolFont{AMSb}{U}{msb}{m}{n}
  \DeclareSymbolFontAlphabet{\mathbb}{AMSb}}
\DeclareMathAlphabet{\mathbx}{U}{BOONDOX-ds}{m}{n}
\SetMathAlphabet{\mathbx}{bold}{U}{BOONDOX-ds}{b}{n}
\DeclareMathAlphabet{\mathbbx} {U}{BOONDOX-ds}{b}{n}
\SetMathAlphabet{\mathcal}{bold}{U}{dutchcal}{b}{n}
\DeclareMathAlphabet{\mathbcal}{U}{dutchcal}{b}{n}
\pgfplotsset{compat=1.18}
\theoremstyle{plain}
\newtheorem{theorem}{Theorem}[section]
\newtheorem{lemma}[theorem]{Lemma}
\newtheorem{proposition}[theorem]{Proposition}
\newtheorem{corollary}[theorem]{Corollary}
\newtheorem*{introthm}{Theorem 4.8}
\theoremstyle{definition}
\newtheorem{example}[theorem]{Example}
\newtheorem{definition}[theorem]{Definition}
\newtheorem{remark}[theorem]{Remark}
\newcommand{\qq}{\mathbx{Q}}
\newcommand{\zz}{\mathbx{Z}}
\newcommand{\uxx}{\mathbx{X}}
\newcommand{\kk}{\mathbx k}   
\newcommand{\llangle}{\langle\hspace*{-3pt}\langle}
\newcommand{\rrangle}{\rangle\hspace*{-3pt}\rangle}
\title{Traverso's Isogeny Conjecture for Some Unitary \lowercase{\emph{p}}-Divisible Groups} 
\subjclass[2020]{14L05, 11G18, 11G10}
\author{Emerald Andrews} \address{Department of Mathematics and Computer Science, Washington College, Chestertown, MD 21620, USA}
\email{estacy2@washcoll.edu}
\author{Deewang Bhamidipati} \address{Department of Mathematics and Statistics, Carleton College, Northfield, MN 55057, USA} \email{bdeewang@carleton.edu}
\author{Maria Fox} \address{Department of Mathematics, Oklahoma State University, Stillwater, OK 74078, USA}
\email{maria.fox@okstate.edu}
\author{Heidi Goodson} \address{Department of Mathematics, Brooklyn College, City University of New York, Brooklyn, NY 11210 USA}
\email{heidi.goodson@brooklyn.cuny.edu}
\author{\\Steven R. Groen} \address{Korteweg-de Vries Institute for Mathematics, University of Amsterdam, Amsterdam, Netherlands}
\email{s.r.groen@uva.nl}
\author{Sandra Nair} \address{Department of Mathematics, Colorado State University, Fort Collins, CO 80523, USA}
\email{sandra.nair@colostate.edu}
\begin{document}

\begin{abstract}
    The isogeny cutoff of a $p$-divisible group $X$ (defined over an algebraically closed field of characteristic $p$) measures the amount of $p$-torsion necessary to determine its isogeny class. The minimal height of $X$ measures its distance to the closest minimal $p$-divisible group (in the sense of Oort). In this paper, we study these invariants for supersingular unitary $p$-divisible groups of signature $(a,b)$. We provide a complete description of the possible minimal heights. As an application, we establish bounds on the isogeny cutoffs for these $p$-divisible groups. Finally, we rephrase our results in the language of the $\mathrm{BT}_m$ stratifications of unitary Shimura varieties of signature $(a,b)$.
\end{abstract}

\maketitle


\section{Introduction}\label{sec:intro}

Given a $p$-divisible group $X$ over an algebraically closed field of characteristic $p$, there are two simpler objects frequently used to capture partial information about $X$: its $p$-torsion subgroup $X[p]$ and its Newton polygon.

In some cases, such as when $X$ is the $p$-divisible group of an elliptic curve, the $p$-torsion group is enough to determine the Newton polygon. In general, Traverso's Isogeny Conjecture \cite[Conj. 5]{Traverso} predicts how much $p$-power torsion is needed to determine the Newton polygon. This number is called the \emph{isogeny cutoff} and is expressed in terms of the dimension and height of $X$. Traverso's conjecture was proven by Nicole and Vasiu \cite{TraversoI}, with a further refinement in \cite{TraversoII}.

The papers \cite{TraversoI} and \cite{TraversoII} both also investigate properties of the \emph{minimal height} of a $p$-divisible group $X$. This is a measure of the distance between $X$ and the closest minimal $p$-divisible group (in the sense of Oort \cite{Oortmin}). The minimal height can be used to give an upper bound on the isogeny cutoff, but it is also an interesting geometric invariant of $X$ in its own right.

The objective of this paper is to study both the minimal heights and the isogeny cutoffs of $p$-divisible groups arising from points in the supersingular locus of unitary Shimura varieties. Unitary Shimura varieties enjoy extra symmetries, making them a good source of $p$-divisible groups with interesting properties. In addition, unitary Shimura varieties are relevant to many active areas in arithmetic geometry, such as the Kudla-Rapoport program \cite{KRGlobal}, \cite{KRproof}. The supersingular locus, the unique closed Newton stratum, plays an especially significant role in the Kudla-Rapoport program, as well as other applications. For this reason, in this paper, we focus on the minimal heights and isogeny cutoffs of $p$-divisible groups arising from the supersingular locus.

Unitary $p$-divisible groups (see Definition \ref{def:updiv}) come equipped with a signature, which is a pair of integers $(a,b)$. Morally, the complexity of $X$ grows with $\mathrm{min}(a,b)$, which we can assume without loss of generality is $a$. With that notation, we give the following complete description of the minimal heights of supersingular unitary $p$-divisible groups.

\begin{introthm}
    Let $a$ and $b$ be nonnegative integers, with the convention that $a \leq b$. If $a < b$, then for any supersingular unitary $p$-divisible group $X$ over $\kk$ of signature $(a,b)$, the minimal height of $X$ is at most $a$. Furthermore, for any $0\leq q \leq a$, there exists a supersingular $p$-divisible group over $\kk$ of signature $(a,b)$ with minimal height exactly $q$. 
    
    If $a=b$, the minimal height of any supersingular unitary $p$-divisible group over $\kk$ of signature $(a,a)$ is at most $a-1$. Furthermore, for any $0\leq q \leq a-1$, there exists a supersingular $p$-divisible group over $\kk$ of signature $(a,a)$ with minimal height exactly $q$.
\end{introthm}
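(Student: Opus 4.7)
The plan is to translate everything into the covariant Dieudonn\'e module formalism for unitary $p$-divisible groups set up earlier in the paper. A supersingular unitary $p$-divisible group $X/\kk$ of signature $(a,b)$ corresponds to a free $\W$-module $M$ of rank $a+b$ equipped with operators $F,V$ satisfying $FV = VF = p$ and an action of $\zz_{p^2}$ that decomposes $M = M^{(0)} \oplus M^{(1)}$ into isotypic components whose $\W$-ranks and whose Hodge filtrations are prescribed by the signature $(a,b)$. Because $X$ is supersingular, $M\otimes \Wq$ is canonically the isocrystal of the minimal supersingular unitary $p$-divisible group of signature $(a,b)$, so $M$ may be viewed as a $\W$-lattice inside this fixed ambient isocrystal. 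The minimal height of $X$ is then a combinatorial length function on the set of such lattices, measuring the relative position of $M$ to the closest Dieudonn\'e lattice coming from an Oort-minimal unitary $p$-divisible group.

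For the upper bound, I would start with an arbitrary Dieudonn\'e lattice $M$ as above and exhibit an explicit Oort-minimal lattice $M_{\min}$ close to it inside the ambient isocrystal. The decisive input is the signature condition: the Hodge filtration has rank $a$ on one isotypic component, forcing at most $a$ of the basis directions of $M$ to deviate from the standard minimal model. Carefully iterating $F$ and $V$ on the ``bad'' directions then produces an $F,V,\zz_{p^2}$-stable overlattice $M_{\min}$ of minimal type whose distance from $M$ is at most $a$. In the equisignature case $a=b$, the Hodge filtration restricts on each isotypic piece to a subspace of codimension $a$ in a space of dimension $2a$, and the resulting self-duality relating $M^{(0)}$ and $M^{(1)}$ forces one extra unit of agreement between $M$ and a minimal lattice, reducing the bound to $a-1$.

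For the realization half, I would build, for every admissible value of $q$, an explicit Dieudonn\'e lattice $M_q$ of signature $(a,b)$ whose minimal height is exactly $q$. Starting from the standard minimal lattice with a basis adapted to the $\zz_{p^2}$-grading, I would modify exactly $q$ basis vectors in the component of rank $a$ by prescribed powers of $p$ and compensate symmetrically in the other component so that the outcome is stable under $F$ and $V$, respects the $\zz_{p^2}$-action, and remains of signature $(a,b)$. In the equisignature case the same construction is used, but with $q\le a-1$, to respect the additional self-duality constraint that appears there. Supersingularity and the signature are immediate from the construction.

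The main obstacle is the sharpness assertion, namely that the lattice $M_q$ just constructed has minimal height \emph{exactly} $q$ and not some smaller value. To handle this I would enumerate, within a ball of radius $q$ around $M_q$ in the ambient isocrystal, all lattices that could correspond to Oort-minimal unitary $p$-divisible groups; this is a finite combinatorial problem once the parameterization of the supersingular locus of unitary Shimura varieties of signature $(a,b)$ that the paper develops earlier is brought to bear. Checking that none of these neighbors is reachable from $M_q$ by an adjustment of length smaller than $q$ then reduces to a valuation computation on the modified basis, which the construction is designed to make transparent.
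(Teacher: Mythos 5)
Your overall plan --- translate to Dieudonn\'e modules, prove the upper bound by exhibiting a nearby superspecial lattice, prove realizability by explicit constructions --- matches the paper's high-level strategy, but there are several gaps that would be hard to close as written, and one place where you have reached for a much harder tool than is needed.

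First, a small but load-bearing slip: for a unitary $p$-divisible group of signature $(a,b)$ the Dieudonn\'e module has $W(\kk)$-rank $2(a+b)$, not $a+b$; the two isotypic summands $M_0$ and $M_1$ each have rank $a+b$. This matters because the whole argument lives on those summands separately.

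The central omission is that you treat the set of nearby superspecial lattices as something to be searched over, when in fact there is a \emph{canonical} one. The paper defines $\Lambda = M + \tau(M) + \tau^2(M) + \cdots$ with $\tau = V^{-1}F$, shows that $\Lambda$ is $\tau$-invariant and hence superspecial, and proves (Lemma~\ref{lem:minht}) that every superspecial lattice containing $M$ must contain $\Lambda$. So the minimal height is \emph{exactly} the smallest $r$ with $p^r\Lambda \subseteq M$; there is nothing to enumerate. This disposes of what you flag as ``the main obstacle'': your plan of enumerating all Oort-minimal lattices in a ball of radius $q$ around $M_q$ is both much harder and unnecessary. Once you have the $\Lambda$-characterization, sharpness in the examples is a single computation of $\tau^a$ applied to one basis vector.

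The upper bound argument is also vaguer than it can afford to be. The claim that ``at most $a$ of the basis directions deviate'' is not where the bound $a$ actually comes from. The paper bounds the length of $\Lambda/M$ by splitting $\Lambda = T_m + S_n$ with $T_m = \sum_i \tau^i(M_0)$ and $S_n = \sum_i \tau^i(M_1)$, tracking the chain $T_{-1} \subseteq T_0 \subseteq T_1 \subseteq \cdots$, introducing the indices $c_i$ and $d_i$ with $c_{i+1} = c_i - d_i$, and using $c_0 = a$ plus adjacency of all the containments to conclude $p^a T_m \subseteq M_0$ (Lemma~\ref{leftlatticelemma}). Crucially, the naive analogue for $M_1$ only gives $p^b S_n \subseteq M_1$, which is useless when $b$ is large. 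The real content of Lemma~\ref{rightlatticelemma} is the identity $g_1 = c_1$ obtained by dualizing and applying $F^{-1}p$, which transfers the $a$-bound from the $M_0$-side to the $M_1$-side. Your sketch has no mechanism that would produce this, and without it the method gives minimal height $\leq b$, not $\leq a$.

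For $a = b$, the improvement to $a-1$ is a genuine lemma (Lemma~\ref{parallel}): a contradiction argument that assumes $c_1 = g_1 = a$ and then shows, by induction using the dualization trick, that all $d_i = h_i = 0$, contradicting the finiteness of the $\tau$-closure. ``Self-duality forces one extra unit of agreement'' gestures at the right phenomenon but is not an argument; in particular, it is not at all obvious why the self-duality should buy exactly one unit rather than zero or more.

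Finally, the realizability half is plausible in outline --- the paper does indeed write down explicit $F$ and $V$ on a graded basis and takes products with superspecial factors to get each $q$ --- but the constructions are delicate. The base example requires $a+b$ odd for the pairing to exist (the paper checks $\delta = (-\delta)^{\sigma^g}$ forces $g$ odd), and the even case is handled by splitting off an $X_{(0,1)}$ factor. Your description of ``modify exactly $q$ basis vectors by prescribed powers of $p$'' doesn't engage with the pairing compatibility $\langle F x, y\rangle = \langle x, V y\rangle^{\sigma}$, which is the actual constraint on what modifications are allowed.

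In short: right scaffolding, but you are missing the $\Lambda$-canonicity lemma (which is what makes minimal height computable at all), the $c_i/d_i$ chain bookkeeping with the transfer $g_1 = c_1$ (which is what gives the bound $a$ rather than $b$), and the contradiction argument for $a=b$. I would suggest proving the $\Lambda$-characterization first; everything else becomes a lattice computation once you have it.
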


As an application, we note in Theorem \ref{thm:isogenycutoffs} that when $a  < b$, the isogeny cutoff of any supersingular unitary $p$-divisible group of signature $(a,b)$ is at most $a+1$,  and when $a=b$ this bound is $a$.

Unitary $p$-divisible groups fit into the larger collection of $p$-divisible groups of dimension $g=a+b$ and height $2g$. It is useful to note how much the extra symmetry imposed by the signature $(a,b)$ action (and compatible polarization) influence the minimal height and isogeny cutoff of $X$. Indeed, using only the information of the height and dimension, one would expect that the minimal height of a supersingular $p$-divisible group $X$ could be as much as $\lfloor \frac{g}{2} \rfloor$ \cite{TraversoII} and the isogeny cutoff could be as much as $\lceil \frac{g}{2} \rceil$ \cite{TraversoI}. 

This paper is organized as follows. In Section \ref{sec:background}, we introduce some background information on unitary $p$-divisible groups and their $p$-adic Dieudonn\'e modules. Section \ref{sec:chains} is the technical heart of the paper, in which we use explicit calculations with Dieudonn\'e modules to prove several useful lemmas. In Section \ref{sec:heights}, we apply these lemmas and a series of important examples to prove our main theorem on minimal heights. In Section \ref{sec:isogenythm}, we give our results on isogeny cutoffs. Finally, in Section \ref{sec:SVandBTn} we rephrase the results of the previous section in the language of the $\mathrm{BT}_m$ stratification of unitary Shimura varieties. In particular, we show in Corollary \ref{cor:union} that the supersingular locus of the unitary Shimura variety $\mathcal{M}(a,b)$ is a union of $\mathrm{BT}_{a+1}$ strata when $a<b$, and is a union of $\mathrm{BT}_a$ strata when $a=b$.

\subsection*{Acknowledgments}

The collaboration was supported as part of the American Institute of Mathematics (AIM) SQuaRE program. The authors thank AIM and the NSF for their support through this unique and valuable program. 

H.G. was supported by NSF grant DMS-2201085.

\section{Background}\label{sec:background}

In this section, we give the definition of our main objects of study, unitary $p$-divisible groups, and their linear algebraic incarnations, unitary Dieudonn\'e modules. We will also recall some useful results from the literature on the structure of these Dieudonn\'e modules.

Throughout this paper, $\kk$ will denote an algebraically closed field of characteristic $p$. Let $W(\kk)$ denote the ring of Witt vectors over $\kk$, with Frobenius automorphism $\sigma$ and fraction field $W(\kk)_{\qq}$. For any $W(\kk)$-module $M$, we'll use $M_{\qq}$ to denote $M \otimes_{W(\kk)} W(\kk)_{\qq}$. Let $K$ be the degree 2 unramified extension of $\qq_p$, with ring of integers $\mathcal{O}_K$. Let $\varphi_0$ and $\varphi_1$ denote the two embeddings of $K$ into $W(\kk)_{\qq}$. Given a containment $A \subseteq B$ of two $W(\kk)$-lattices in a $W(\kk)_{\qq}$-vector space, we write $A \subseteq_n B$ to mean that the $W(\kk)$-module $B/A$ has length $n$.

Throughout the paper, $a$ and $b$ will always be nonnegative integers. Following \cite{VollaardWedhorn}, we make the following two definitions. 

\begin{definition}\label{def:updiv}
 A \textbf{unitary $\boldsymbol{p}$-divisible group of signature $\boldsymbol{(a,b)}$} over $\kk$  is a tuple $(X, \iota, \lambda)$  where

 \begin{itemize}
\item{$X$ is a $p$-divisible group over $\kk$, of dimension $a+b$ and height $2(a+b)$;}
\item{$\iota: \mathcal{O}_K \rightarrow \mathrm{End}(X)$ is an action satisfying the \emph{signature $(a,b)$ condition}: for all $m \in \mathcal{O}_K$,
$$\mathrm{charpol}(\iota(m) \mid \mathrm{Lie}(X) ) = (T - \varphi_0(m))^a(T-\varphi_1(m))^b \in W(\kk)[T];$$

}
\item{$\lambda: X \rightarrow X^\vee$ is a $p$-principal polarization, meeting the following $\mathcal{O}_K$-linearity condition, for all $m \in \mathcal{O}_K$:
$$\lambda \circ \iota (m) = \iota (\overline{m})^\vee \circ \lambda.$$
}
\end{itemize}

When no confusion is possible, we'll refer to this tuple only as $X$.
    
\end{definition}

\begin{definition}
A \textbf{unitary Dieudonn\'e module of signature $\boldsymbol{(a,b)}$} over the field $\kk$  is a tuple $(M, F, V,  \langle  \cdot ,  \cdot \rangle, M = M_0 \oplus M_1 ) $ where

\begin{itemize}
\item{$M$ is a free $W(\kk)$-module of rank $2(a+b)$;}
\item{$F: M \rightarrow M$ is a $\sigma$-semilinear operator, $V: M \rightarrow M$ is a $\sigma^{-1}$-semilinear operator, with $F \circ V = V \circ F = p$;}

\item{$\langle \cdot , \cdot \rangle: M \times M \rightarrow W(\kk)$ is a perfect alternating $W(\kk)$-bilinear pairing on M such that
$\langle F(x), y \rangle = \langle x, V(y) \rangle^{\sigma}$
for all $x,y \in M$};
\item{$M = M_0 \oplus M_1$ is a decomposition of $M$ into rank-$(a+b)$ summands, each totally isotropic with respect to $\langle \cdot , \cdot \rangle $, with the property that $F$ and $V$ are homogenous of degree 1, such that 
$$pM_0 \subseteq_{b} FM_1 \subseteq_a M_0 \quad \text{  and  } \quad  pM_1 \subseteq_{a} FM_0 \subseteq_b M_1.$$ }

\end{itemize}

When no confusion is possible, we'll refer to this tuple only as $M$.
\end{definition}

 As discussed in \cite{VollaardWedhorn}, for any algebraically closed field $\kk$, covariant Dieudonn\'e theory gives a bijection between the collection of all unitary $p$-divisible groups of signature $(a,b)$ over $\kk$ and the collection of all unitary Dieudonn\'e modules of signature $(a,b)$ over $\kk$. 

Note that reversing the labeling of the embeddings $\varphi_0$ and $\varphi_1$, or on the level of Dieudonn\'e modules the labeling of $M_0$ and $M_1$, changes the signature from $(a,b)$ to $(b,a)$. So, without loss of generality we will always assume $a \leq b,$ so that $\mathrm{min}(a,b)=a.$

Recall that a $p$-divisible group is called $\emph{supersingular}$ (resp. \emph{superspecial}) if it is isogenous (resp. isomorphic) to a product of the $p$-divisible groups of supersingular elliptic curves. We also refer to the corresponding $p$-adic Dieudonn\'e module supersingular (resp. superspecial).

Let $(M, F, V,  \langle  \cdot ,  \cdot \rangle, M = M_0 \oplus M_1 ) $ be a unitary Dieudonn\'e module over $\kk$ of signature $(a,b)$. Due to the compatibility between the action and the polarization, much of the structure of $M$ is controlled by the submodule $M_0$ (or, symmetrically, $M_1$). To see this, for either $i =0$ or $i=1$, it is possible to define a new pairing on $M_i$, as 
\begin{align*}
    \llangle \cdot , \cdot \rrangle: M_i \times M_i &\rightarrow W(\kk)\\
\llangle   x , y  \rrangle &\coloneqq \langle x, F(y) \rangle.
\end{align*}

We will use the same notation to denote the extension of scalars of this pairing to a $W(\kk)_{\qq}$-valued pairing on $(M_i)_{\qq}$. Given a $W(\kk)$-lattice $L \subseteq (M_i)_{\qq}$, we denote 
$$L^\vee = \{ x \in (M_i)_{\qq} \ | \ \llangle x, y \rrangle \in W(\kk) \text{ for all } y \in L \}.$$

Let $\tau$ be the $\sigma^2$-linear operator $\frac{1}{p}F^2 =  V^{-1} \circ F$. As $F$ and $V$ interchange $M_0$ and $M_1$, note that $\tau$ is an operator on each $(M_i)_{\qq}$. Some useful properties of $\tau$ and the pairing $\llangle \cdot , \cdot \rrangle$, due to Vollaard \cite{Vollaard}, are recorded below.

\begin{proposition}\label{prop:0and1conditions}
Let $(M, F, V,  \langle  \cdot ,  \cdot \rangle, M = M_0 \oplus M_1 ) $ be a unitary Dieudonn\'e module over $\kk$ of signature $(a,b)$. For either $i=0$ or $i=1$, and any $W(\kk)$-lattice $L \subseteq (M_i)_{\qq}$, we have the following:
\begin{itemize}
\item{ $\tau(L) ^\vee = \tau(L^\vee)$;}
\item{$(L^\vee)^\vee = \tau(L)$;}
\item{$F (M_i) = p M_{i+1}^\vee$, or equivalently $M_i = F^{-1} (p M_{i+1}^\vee),$ with indices modulo 2.} 
\end{itemize}
The relationship between $M_0$ and $M_1$ can be expressed in the following four (equivalent) chain conditions:
\begin{align*}
pM_0^\vee & \subseteq_a  M_0 \subseteq_b M_0^\vee, \hspace{1cm}
pM_0^\vee \subseteq_a \tau(M_0) \subseteq_b M_0^\vee, \\
pM_1^\vee &\subseteq_b M_1 \subseteq_a M_1^\vee, \; \text{ and } \;pM_1^\vee \subseteq_b \tau(M_1) \subseteq_a M_1^\vee.   
\end{align*}

\end{proposition}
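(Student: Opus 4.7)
The plan is to handle the three bullet points in the order third, second, first, and then to deduce the four chain conditions. The common backbone is the defining formula $\llangle x,y\rrangle=\langle x,F(y)\rangle$ together with the Frobenius compatibility $\langle F(x),y\rangle=\langle x,V(y)\rangle^{\sigma}$, from which $\langle V(x),y\rangle=\sigma^{-1}\langle x,F(y)\rangle$ follows by the alternating property. Because $M_0$ and $M_1$ are each totally isotropic for the perfect pairing on $M$, the restriction of $\langle\cdot,\cdot\rangle$ induces a perfect $W(\kk)$-pairing between $(M_0)_{\qq}$ and $(M_1)_{\qq}$, with $M_0$ and $M_1$ mutually dual lattices; write $A\mapsto A^{\ast}$ for the resulting duality and use $A^{\ast\ast}=A$ freely.

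To prove $F(M_i)=pM_{i+1}^\vee$, I would unpack the definition: $x\in M_{i+1}^\vee$ iff $\langle x,F(y)\rangle\in W(\kk)$ for all $y\in M_{i+1}$. Rewriting the left side as $\sigma\langle V(x),y\rangle$ and using $M_{i+1}^{\ast}=M_i$ gives $M_{i+1}^\vee=V^{-1}(M_i)$; multiplying by $p$ and invoking $V/p=F^{-1}$ (valid on the rational level since $FV=p$) yields $pM_{i+1}^\vee=V^{-1}(pM_i)=F(M_i)$. For the second bullet, the same unpacking shows $L^\vee=(F(L))^{\ast}$, so $(L^\vee)^\vee=(F((F(L))^{\ast}))^{\ast}$; the identity $\langle y,F(x)\rangle=\sigma\langle V(y),x\rangle$ then translates the outer $\ast$ into the condition $V(y)\in(F(L))^{\ast\ast}=F(L)$, yielding $(L^\vee)^\vee=V^{-1}F(L)=\tau(L)$. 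The first bullet is then formal: apply $\vee$ to both sides of $(L^\vee)^\vee=\tau(L)$, and to the same identity with $L$ replaced by $L^\vee$, to conclude $\tau(L)^\vee=((L^\vee)^\vee)^\vee=\tau(L^\vee)$.

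For the chain conditions, substituting $F(M_1)=pM_0^\vee$ into the defining inclusion $pM_0\subseteq_b F(M_1)\subseteq_a M_0$ gives $pM_0\subseteq_b pM_0^\vee\subseteq_a M_0$, and rescaling by $p^{-1}$ produces $pM_0^\vee\subseteq_a M_0\subseteq_b M_0^\vee$. To bring in $\tau$, I would compute $\tau(M_0)=V^{-1}F(M_0)=V^{-1}(pM_1^\vee)=F(M_1^\vee)$. The $\sigma$-semilinear isomorphism $F\colon(M_1)_{\qq}\to(M_0)_{\qq}$ preserves $W(\kk)$-lengths of quotients because $\sigma$ fixes $p$, so applying it to the $M_1$-chain $pM_1^\vee\subseteq_b M_1\subseteq_a M_1^\vee$ yields $p\tau(M_0)\subseteq_b pM_0^\vee\subseteq_a\tau(M_0)$, which rescales to $pM_0^\vee\subseteq_a\tau(M_0)\subseteq_b M_0^\vee$. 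The two statements for $M_1$ follow by the symmetric argument. The main obstacle is the double-dual identity, where keeping the $\sigma$-twists and powers of $p$ straight requires care; once it is in hand, all remaining items reduce to routine manipulations of the defining chains.
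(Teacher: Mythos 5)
Your argument is correct and, unlike the paper, actually proves the proposition: the paper's proof is a bare citation to Vollaard \cite[1.11--1.13]{Vollaard}, so your write-up fills in a gap the paper leaves to the reader. The skeleton is sound. You correctly note that total isotropy of $M_0$, $M_1$ makes $\langle\cdot,\cdot\rangle$ restrict to a perfect pairing between $(M_0)_{\qq}$ and $(M_1)_{\qq}$ with $M_0^{\ast}=M_1$, and that $L^\vee=(F(L))^{\ast}$. From there, $M_{i+1}^\vee=V^{-1}(M_i)$ and hence $pM_{i+1}^\vee=F(M_i)$ (using $\sigma(p)=p$) is exactly right, and the double-dual computation $(L^\vee)^\vee=\{z:V(z)\in((F(L))^{\ast})^{\ast}\}=V^{-1}F(L)=\tau(L)$ goes through. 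Deriving $\tau(L)^\vee=\tau(L^\vee)$ formally from the double-dual identity applied to $L$ and to $L^\vee$ is clean. The chain conditions follow as you say: substituting $FM_1=pM_0^\vee$ into $pM_0\subseteq_b FM_1\subseteq_a M_0$ gives $pM_0^\vee\subseteq_a M_0$ directly, and $M_0\subseteq_b M_0^\vee$ after dividing the outer containment by $p$; the $\tau$-chain is obtained by pushing the $M_1$-chain through the length-preserving semilinear bijection $F$ and using $\tau(M_0)=F(M_1^\vee)$.

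One small presentational wrinkle: as written, your derivation of the $\tau$-chain for $M_0$ invokes the plain chain $pM_1^\vee\subseteq_b M_1\subseteq_a M_1^\vee$, which you only dispatch afterward with ``by the symmetric argument.'' It is logically fine, since the $M_1$ plain chain follows from $pM_1\subseteq_a FM_0\subseteq_b M_1$ and $FM_0=pM_1^\vee$ exactly as for $M_0$, but the cleanest ordering is to establish both plain chains first and then both $\tau$-chains. Also, when you say $F$ ``preserves lengths because $\sigma$ fixes $p$,'' the length-preservation really comes from $\sigma$ being a ring automorphism of $W(\kk)$; the fact that $\sigma(p)=p$ is what you need separately to pull the factor $p$ through $F$ (i.e.\ $F(pM_1^\vee)=pF(M_1^\vee)$). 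Worth keeping those two facts distinct. Otherwise this is a complete and correct proof.
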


\begin{proof}
These statements can be found in  \cite[Subsection 1.11, Proposition 1.12, and Remark 1.13]{Vollaard}.  Note that the article of Vollaard is concerned only with the Dieudonn\'e modules of \emph{supersingular} $p$-divisible groups, but that assumption is not relevant to these statements.
\end{proof}

\section{Isogenies to Superspecial $p$-Divisible Groups}\label{sec:chains}

In this section, for any supersingular unitary $p$-divisible group $X$ of signature $(a,b)$, we will construct a superspecial $p$-divisible group $\uxx$ and an isogeny $\rho: X \rightarrow \uxx$ with the property that $\mathrm{ker}(\rho) \subseteq X[p^a]$  (in fact, contained in  $X[p^{a-1}]$ when $a=b$). This result is in Corollary \ref{cor:isog}, which relies on a series of lemmas concerning the action of $\tau$ on $M_0$ and $M_1$. 

Consider a supersingular unitary $p$-divisible group $X$ over $\kk$ of signature $(a,b)$, with Dieudonn\'e module $(M, F, V,  \langle  \cdot ,  \cdot \rangle, M = M_0 \oplus M_1 ) $. For any $i \geq 0$, we define $W(\kk)$-lattices 
\begin{align*}
T_i &= M_0 + \tau(M_0) + \cdots + \tau^i(M_0),\\
S_i &= M_1 + \tau(M_1) + \cdots + \tau^i(M_1).
\end{align*}

\begin{lemma}\label{lem:existenceofmandn}
Let $M$ be a unitary $p$-divisible group of signature $(a,b)$ over $\kk$. Assume that $M$ is supersingular. There exist integers $m$ and $n$ such that $T_{m}$ and $S_{n}$ are $\tau$-invariant.
\end{lemma}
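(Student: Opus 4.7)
My plan is to show that the ascending chain of $W(\kk)$-lattices $T_0 \subseteq T_1 \subseteq T_2 \subseteq \cdots$ in $(M_0)_\qq$ must stabilize. As soon as some $T_m$ equals $T_{m+1}$, we have $\tau^{m+1}(M_0) \subseteq T_m$, and consequently
$$\tau(T_m) \;=\; \tau(M_0) + \tau^2(M_0) + \cdots + \tau^{m+1}(M_0) \;\subseteq\; T_m,$$
so $T_m$ is $\tau$-invariant. The argument producing $n$ is symmetric, applied instead to the chain $S_i$ in $(M_1)_\qq$.

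To force stabilization, it suffices to exhibit a single $W(\kk)$-lattice $L \subseteq (M_0)_\qq$ containing every $T_i$: any ascending chain of $W(\kk)$-lattices in a fixed finite-dimensional $W(\kk)_\qq$-vector space that is bounded above must stabilize. This is where the supersingularity hypothesis enters crucially. Since every Newton slope of $(M,F)$ equals $\tfrac{1}{2}$, the $\sigma^2$-linear operator $\tau = p^{-1}F^2$ acts on $(M_0)_\qq$ with all slopes equal to $0$. Because $\kk$ is algebraically closed, the Dieudonn\'e--Manin classification of unit-root $\sigma^2$-isocrystals supplies a basis of $\tau$-fixed vectors for $(M_0)_\qq$, and the $W(\kk)$-span $L$ of such a basis is a $\tau$-invariant lattice. (Equivalently, one can invoke the existence of an isogeny from $X$ to a superspecial unitary $p$-divisible group $\uxx$: on the Dieudonn\'e module of $\uxx$, $F^2$ differs from $p$ by a unit, so the zero-component of that Dieudonn\'e module furnishes a $\tau$-invariant $W(\kk)$-lattice after rationalizing.)

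With $L$ in hand, commensurability of lattices in $(M_0)_\qq$ yields an integer $N \geq 0$ with $M_0 \subseteq p^{-N} L$. Applying $\tau^i$ and using $\tau^i(L) = L$ gives $\tau^i(M_0) \subseteq p^{-N} L$ for every $i \geq 0$, hence $T_i \subseteq p^{-N} L$ for all $i$. The chain therefore stabilizes at some index $m$, producing the required $T_m$. Repeating the argument on $(M_1)_\qq$ produces $n$.

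The step I expect to require the most care is the construction of the $\tau$-invariant lattice $L$: this involves either invoking Dieudonn\'e--Manin in the $\sigma^2$-linear setting, or producing a superspecial isogeny target and transporting lattices. Both routes are standard consequences of the fact that the isocrystal is pure of slope $\tfrac{1}{2}$, but the bookkeeping between the $\sigma$-linear operators $F,V$ on $M$ and the $\sigma^2$-linear operator $\tau$ on the $M_i$ is the only real moving part. Everything else is a soft argument about bounded ascending chains of lattices.
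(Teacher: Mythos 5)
Your proposal is correct and is essentially the same approach as the paper's, which deduces from supersingularity that $\tau$ has slope zero and then cites Rapoport--Zink, Prop.~2.17 for the stabilization; your argument simply unpacks what that citation delivers (Dieudonn\'e--Manin gives a $\tau$-fixed basis, hence a $\tau$-invariant lattice $L$, which bounds the ascending chain $T_0\subseteq T_1\subseteq\cdots$ by commensurability).

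Two small points. First, the inference ``$\tau(T_m)\subseteq T_m$, so $T_m$ is $\tau$-invariant'' is not formal: for a semilinear automorphism of nonzero slope (e.g.\ $x\mapsto p\,\sigma^2(x)$ in rank one) the chain $T_i$ can stabilize while $\tau$ still strictly shrinks $T_m$. You need the slope-zero input once more: since $p^{N'}L\subseteq T_m\subseteq p^{-N}L$ for your $\tau$-invariant $L$, the colengths $[T_m:\tau^n(T_m)]$ are uniformly bounded, forcing $\tau(T_m)=T_m$. You have the ingredients, but the step is asserted rather than argued. Second, be careful with the parenthetical alternative of appealing to an isogeny onto a superspecial unitary $p$-divisible group: the classical fact that a supersingular $p$-divisible group is isogenous to a superspecial one is fine here, but an appeal to the paper's own Corollary~\ref{cor:isog} would be circular, since that result sits downstream of this lemma via Lemmas~\ref{leftlatticelemma} and~\ref{rightlatticelemma}.
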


\begin{proof}
Note that because $M$ is supersingular, the operator $F$ is a slope $\frac{1}{2}$ operator on the isocrystal $M_{\qq}$. So, the operator $\tau$ on both $(M_0)_{\qq}$ and $(M_1)_{\qq}$ is slope zero. Then, the existence of $m$ and $n$ follows from \cite[Proposition 2.17]{rapoport1996period}. 
\end{proof}

In order to understand the relationship between $T_m$ and $M_0$, it is important to understand the relationship between each $T_i$ and $T_{i+1}$ (and similarly for  $M_1$). It is convenient to introduce the following notation and the diagrams in Figures \ref{fig:Tidiag} and \ref{fig:Sidiag} to understand these relationships.

Set $T_{-1} = \tau^{-1}(p M_0^\vee)$ and $S_{-1} = \tau^{-1}(p M_1^\vee).$ We define four sequences of integers: 
\begin{itemize}
\item{For $0 \leq i \leq m+1$, define $c_i$ by
$T_{i-1}  \subseteq_{c_i} T_{i}$;}
\item{For $0 \leq i \leq n+1$, define $g_i$ by
$S_{i-1}  \subseteq_{g_i} S_{i};$}
\item{For $0 \leq i \leq m$, define $d_i$ by
$\tau(T_{i-1}) \subseteq_{d_i} T_i \cap \tau(T_i)$;}
\item{For $0 \leq i \leq n$, define $h_i$ by
$\tau(S_{i-1}) \subseteq_{h_i} S_i \cap \tau(S_i)$.}
\end{itemize}

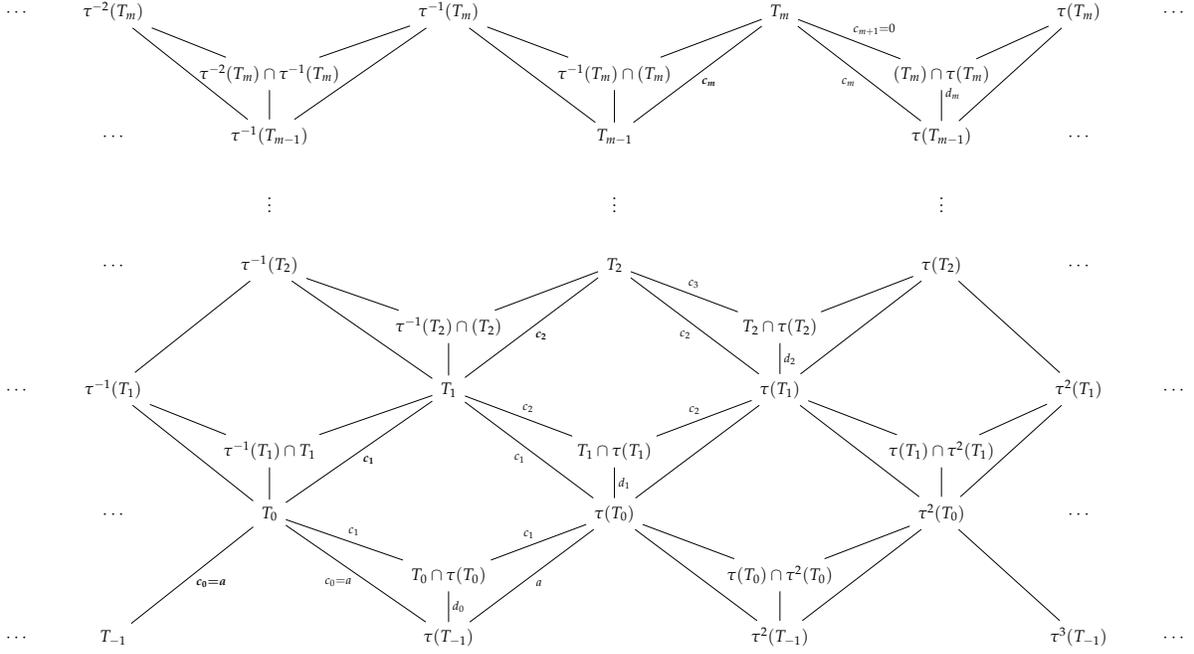
\begin{figure}[h!]
\centering
\[\begin{adjustbox}{max height=.205\textheight}
\begin{tikzcd}
	\cdots & {\tau^{-2}(T_m)} && {\tau^{-1}(T_m)} && {T_m} && {\tau(T_m)} & \cdots \\
	&& {\tau^{-2}(T_m)\cap \tau^{-1}(T_m)} && {\tau^{-1}(T_m)\cap (T_m)} && {(T_m)\cap \tau(T_m)} \\
	& \cdots & {\tau^{-1}(T_{m-1})} && {T_{m-1}} && {\tau(T_{m-1})} & \cdots \\
	&& \vdots && \vdots && \vdots \\
	&\cdots& {\tau^{-1}(T_2)} && {T_2} && {\tau(T_2)} & \cdots \\
	&&& {\tau^{-1}(T_2) \cap (T_2)} && {T_2 \cap \tau(T_2)} \\
	\cdots & {\tau^{-1}(T_1)} && {T_1} && {\tau(T_1)} && {\tau^2(T_1)} & \cdots \\
	&& {\tau^{-1}(T_1) \cap T_1} && {T_1 \cap \tau(T_1)} && {\tau(T_1) \cap \tau^2(T_1)} \\
	& \cdots & {T_0} && {\tau(T_0)} && {\tau^2(T_0)} & \cdots \\
	&&& {T_0 \cap \tau(T_0)} && {\tau(T_0) \cap \tau^2(T_0)} \\
	\cdots & {T_{-1}} && {\tau(T_{-1})} && {\tau^2(T_{-1})} && {\tau^3(T_{-1})} & \cdots \\
	\arrow[no head, from=1-2, to=2-3]
	\arrow[no head, from=1-2, to=3-3]
	\arrow[no head, from=1-4, to=2-3]
	\arrow[no head, from=1-4, to=2-5]
	\arrow[no head, from=1-4, to=3-3]
	\arrow[no head, from=1-4, to=3-5]
	\arrow[no head, from=1-6, to=2-5]
	\arrow["{c_{m+1}=0}", no head, from=1-6, to=2-7]
	\arrow["{\boldsymbol{c_m}}", no head, from=1-6, to=3-5]
	\arrow["{c_m}"', no head, from=1-6, to=3-7]
	\arrow[no head, from=1-8, to=2-7]
	\arrow[no head, from=1-8, to=3-7]
	\arrow[no head, from=2-3, to=3-3]
	\arrow[no head, from=2-5, to=3-5]
	\arrow["{d_m}"{pos=0.1}, no head, from=2-7, to=3-7]
	\arrow[no head, from=5-3, to=6-4]
	\arrow[no head, from=5-3, to=7-2]
	\arrow[no head, from=5-3, to=7-4]
	\arrow[no head, from=5-5, to=6-4]
	\arrow["{c_3}", no head, from=5-5, to=6-6]
	\arrow["{\boldsymbol{c_2}}", no head, from=5-5, to=7-4]
	\arrow["{c_2}"', no head, from=5-5, to=7-6]
	\arrow[no head, from=5-7, to=6-6]
	\arrow[no head, from=5-7, to=7-6]
	\arrow[no head, from=5-7, to=7-8]
	\arrow[no head, from=6-4, to=7-4]
	\arrow["{d_2}", no head, from=6-6, to=7-6]
	\arrow[no head, from=7-2, to=8-3]
	\arrow[no head, from=7-2, to=9-3]
	\arrow[no head, from=7-4, to=8-3]
	\arrow["{c_2}", no head, from=7-4, to=8-5]
	\arrow["{\boldsymbol{c_1}}", no head, from=7-4, to=9-3]
	\arrow["{c_1}"', no head, from=7-4, to=9-5]
	\arrow["{c_2}"', no head, from=7-6, to=8-5]
	\arrow[no head, from=7-6, to=8-7]
	\arrow[no head, from=7-6, to=9-5]
	\arrow[no head, from=7-6, to=9-7]
	\arrow[no head, from=7-8, to=8-7]
	\arrow[no head, from=7-8, to=9-7]
	\arrow[no head, from=8-3, to=9-3]
	\arrow["{d_1}", no head, from=8-5, to=9-5]
	\arrow[no head, from=8-7, to=9-7]
	\arrow["{c_1}", no head, from=9-3, to=10-4]
	\arrow["{\boldsymbol{c_0=a}}", no head, from=9-3, to=11-2]
	\arrow["{c_0=a}"', no head, from=9-3, to=11-4]
	\arrow["{c_1}"', no head, from=9-5, to=10-4]
	\arrow[no head, from=9-5, to=10-6]
	\arrow["a", no head, from=9-5, to=11-4]
	\arrow[no head, from=9-5, to=11-6]
	\arrow[no head, from=9-7, to=10-6]
	\arrow[no head, from=9-7, to=11-6]
	\arrow[no head, from=9-7, to=11-8]
	\arrow["{d_0}", no head, from=10-4, to=11-4]
	\arrow[no head, from=10-6, to=11-6]
\end{tikzcd}
\end{adjustbox}\]
\caption{$T_i$ Diagram}
  \label{fig:Tidiag}
\end{figure}

\vfill
  
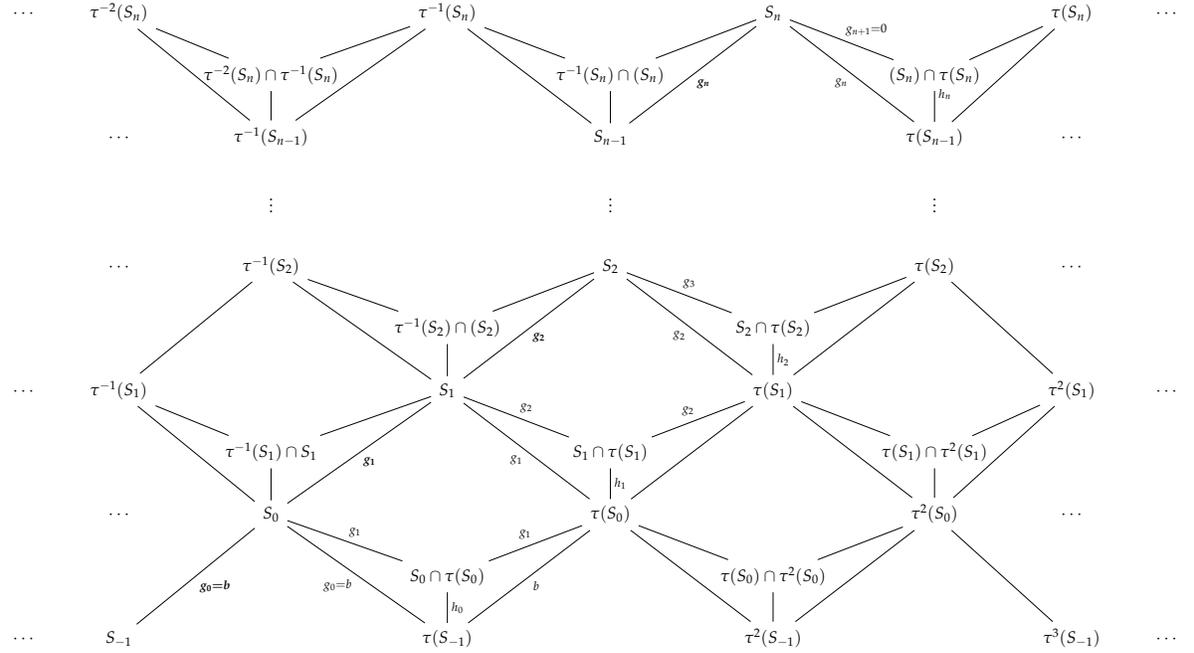
\begin{figure}[h!]
\centering
\[\begin{adjustbox}{max height=.205\textheight}
\begin{tikzcd}
	\cdots & {\tau^{-2}(S_n)} && {\tau^{-1}(S_n)} && {S_n} && {\tau(S_n)} & \cdots \\
	&& {\tau^{-2}(S_n)\cap \tau^{-1}(S_n)} && {\tau^{-1}(S_n)\cap (S_n)} && {(S_n)\cap \tau(S_n)} \\
	& \cdots & {\tau^{-1}(S_{n-1})} && {S_{n-1}} && {\tau(S_{n-1})} & \cdots \\
	&& \vdots && \vdots && \vdots \\
	& \cdots & {\tau^{-1}(S_2)} && {S_2} && {\tau(S_2)} & \cdots \\
	&&& {\tau^{-1}(S_2) \cap (S_2)} && {S_2 \cap \tau(S_2)} \\
	\cdots & {\tau^{-1}(S_1)} && {S_1} && {\tau(S_1)} && {\tau^2(S_1)} & \cdots \\
	&& {\tau^{-1}(S_1) \cap S_1} && {S_1 \cap \tau(S_1)} && {\tau(S_1) \cap \tau^2(S_1)} \\
	& \cdots & {S_0} && {\tau(S_0)} && {\tau^2(S_0)} & \cdots \\
	&&& {S_0 \cap \tau(S_0)} && {\tau(S_0) \cap \tau^2(S_0)} \\
	\cdots & {S_{-1}} && {\tau(S_{-1})} && {\tau^2(S_{-1})} && {\tau^3(S_{-1})} & \cdots \\
	\arrow[no head, from=1-2, to=2-3]
	\arrow[no head, from=1-2, to=3-3]
	\arrow[no head, from=1-4, to=2-3]
	\arrow[no head, from=1-4, to=2-5]
	\arrow[no head, from=1-4, to=3-3]
	\arrow[no head, from=1-4, to=3-5]
	\arrow[no head, from=1-6, to=2-5]
	\arrow["{g_{n+1}=0}", no head, from=1-6, to=2-7]
	\arrow["{\boldsymbol{g_n}}", no head, from=1-6, to=3-5]
	\arrow["{g_n}"', no head, from=1-6, to=3-7]
	\arrow[no head, from=1-8, to=2-7]
	\arrow[no head, from=1-8, to=3-7]
	\arrow[no head, from=2-3, to=3-3]
	\arrow[no head, from=2-5, to=3-5]
	\arrow["{h_n}"{pos=0.1}, no head, from=2-7, to=3-7]
	\arrow[no head, from=5-3, to=6-4]
	\arrow[no head, from=5-3, to=7-2]
	\arrow[no head, from=5-3, to=7-4]
	\arrow[no head, from=5-5, to=6-4]
	\arrow["{g_3}", no head, from=5-5, to=6-6]
	\arrow["{\boldsymbol{g_2}}", no head, from=5-5, to=7-4]
	\arrow["{g_2}"', no head, from=5-5, to=7-6]
	\arrow[no head, from=5-7, to=6-6]
	\arrow[no head, from=5-7, to=7-6]
	\arrow[no head, from=5-7, to=7-8]
	\arrow[no head, from=6-4, to=7-4]
	\arrow["{h_2}", no head, from=6-6, to=7-6]
	\arrow[no head, from=7-2, to=8-3]
	\arrow[no head, from=7-2, to=9-3]
	\arrow[no head, from=7-4, to=8-3]
	\arrow["{g_2}", no head, from=7-4, to=8-5]
	\arrow["{\boldsymbol{g_1}}", no head, from=7-4, to=9-3]
	\arrow["{g_1}"', no head, from=7-4, to=9-5]
	\arrow["{g_2}"', no head, from=7-6, to=8-5]
	\arrow[no head, from=7-6, to=8-7]
	\arrow[no head, from=7-6, to=9-5]
	\arrow[no head, from=7-6, to=9-7]
	\arrow[no head, from=7-8, to=8-7]
	\arrow[no head, from=7-8, to=9-7]
	\arrow[no head, from=8-3, to=9-3]
	\arrow["{h_1}", no head, from=8-5, to=9-5]
	\arrow[no head, from=8-7, to=9-7]
	\arrow["{g_1}", no head, from=9-3, to=10-4]
	\arrow["{\boldsymbol{g_0=b}}", no head, from=9-3, to=11-2]
	\arrow["{g_0=b}"', no head, from=9-3, to=11-4]
	\arrow["{g_1}"', no head, from=9-5, to=10-4]
	\arrow[no head, from=9-5, to=10-6]
	\arrow["b", no head, from=9-5, to=11-4]
	\arrow[no head, from=9-5, to=11-6]
	\arrow[no head, from=9-7, to=10-6]
	\arrow[no head, from=9-7, to=11-6]
	\arrow[no head, from=9-7, to=11-8]
	\arrow["{h_0}", no head, from=10-4, to=11-4]
	\arrow[no head, from=10-6, to=11-6]
\end{tikzcd}
\end{adjustbox}\]
\caption{$S_i$ Diagram}
  \label{fig:Sidiag}
\end{figure}

These relations are demonstrated in the diagrams in Figures \ref{fig:Tidiag} and \ref{fig:Sidiag}. In each diagram, a line connecting a lattice $A$ to  a lattice $B$, where $B$ is above $A$, is communicating the information that $A \subseteq B$. 

Note that the containments $\tau(T_{-1}) \subseteq_a T_0$ and $\tau(T_{-1}) \subseteq_a \tau(T_0)$ are due to Proposition \ref{prop:0and1conditions}, and then the other containments shown in Figure \ref{fig:Tidiag} follow from the definition of the $T_i$ and the fact that $\tau$ is a (semi)linear automorphism of the isocrystal $(M_0)_{\qq}$.

Also note that, while $c_i$ is defined to be the index of $T_{i-1}$ in $T_i$, by the second isomorphism theorem, $c_i$ is also the index of $T_{i-1} \cap \tau(T_{i-1})$ in $\tau(T_{i-1})$. Further, $c_i$ is the index of $T_{i-1} \cap \tau(T_{i-1})$ in $T_{i-1}$: this is true for $i=1$ because both $T_0$ and $\tau(T_0)$ contain $\tau(T_{-1})$ with index $a$, and then one can argue inductively. 

Finally, observe that all the shown containments in Figure \ref{fig:Tidiag} are adjacent containments, meaning that each line showing a containment $A \subseteq B$ also has the property that $pB \subseteq A \subseteq B$. This is true for the containments $\tau(T_{-1}) \subseteq_a T_0$ and $\tau(T_{-1}) \subseteq_a \tau(T_0)$ by Proposition \ref{prop:0and1conditions}. Then, all other containments in the diagram are also adjacent, using induction on $i$, the fact that $\tau$ is a (semi)linear automorphism, and the fact that whenever $A \subseteq B \subseteq C$ with $A$ adjacent to $C$, then $A$ is adjacent to $B$ and $B$ is adjacent to $C$. 

The analogous arguments justify the containments in Figure \ref{fig:Sidiag}, the labeling of $g_i$ and $h_i$, and the fact that all containments shown are adjacent.

Using  Figure \ref{fig:Tidiag}, we are now ready to describe the relationship between $M_0$ and $T_m$ in Lemma \ref{leftlatticelemma} below. We include more detail than is strictly necessary for this paper, so that we can describe a result analogous to \cite[Lemma 2.2]{Vollaard}, \cite[Proposition 2.19]{howard2014supersingular}, \cite[Proposition 4.4]{GL4} etc., as a potentially useful addition to the literature.

\begin{lemma}\label{leftlatticelemma}
Let $M$ be a supersingular unitary Dieudonn\'e module over $\kk$ of signature $(a,b)$. Let $m$ be minimal such that $T_m$ is $\tau$-invariant. Then,
$$p^a T_m \subseteq pT_m^\vee  \subseteq p M_0 ^\vee \subseteq_a M_0 \subseteq T_m \subseteq  p^{1-a} T_m^\vee.$$
\end{lemma}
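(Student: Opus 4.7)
The plan is to verify each inclusion in the chain, working from the middle outward. The two innermost inclusions are immediate: $pM_0^\vee \subseteq_a M_0$ is Proposition \ref{prop:0and1conditions}, and $M_0 \subseteq T_m$ is the definition of $T_m$. For $pT_m^\vee \subseteq pM_0^\vee$, dualizing $M_0 \subseteq T_m$ with respect to $\llangle\cdot,\cdot\rrangle$ gives $T_m^\vee \subseteq M_0^\vee$; scaling by $p$ yields the claim.

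The two outer inclusions $p^a T_m \subseteq pT_m^\vee$ and $T_m \subseteq p^{1-a}T_m^\vee$ are equivalent after multiplication by $p^{a-1}$, and both reduce to $p^{a-1}T_m \subseteq T_m^\vee$. Because $T_m$ is $\tau$-invariant and $\tau(L^\vee) = \tau(L)^\vee$ (Proposition \ref{prop:0and1conditions}), the lattice $T_m^\vee$ is also $\tau$-invariant, and
\[
T_m^\vee \;=\; \bigcap_{j \in \zz} \tau^j(M_0^\vee).
\]
Transporting via $\tau^j$ and using $\tau^j(T_m) = T_m$, the desired containment is equivalent to the single inclusion $p^{a-1}T_m \subseteq M_0^\vee$. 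Decomposing $T_m = \sum_{k=0}^m \tau^k(M_0)$, this reduces further to $p^{a-1}\tau^k(M_0) \subseteq M_0^\vee$ for each $0 \leq k \leq m$. A short induction on $k$, using $\tau(M_0) \subseteq M_0^\vee$ (Proposition \ref{prop:0and1conditions}) as the base case and the derived bound $\tau(M_0^\vee) \subseteq p^{-1}M_0^\vee$ (obtained by dualizing $pM_0^\vee \subseteq \tau(M_0)$ and invoking $(L^\vee)^\vee = \tau(L)$), yields $\tau^k(M_0) \subseteq p^{1-k}M_0^\vee$ for $k \geq 1$. Hence $p^{a-1}\tau^k(M_0) \subseteq p^{a-k}M_0^\vee \subseteq M_0^\vee$ provided $k \leq a$.

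The remaining step, and the main obstacle, is therefore the bound $m \leq a$; equivalently, the assertion that $T_a$ is already $\tau$-invariant. My plan here is to exploit the combinatorial data of Figure \ref{fig:Tidiag}: the sequence of increments $c_i = [T_i : T_{i-1}]$ is a non-increasing sequence of non-negative integers with $c_0 = a$, and $m$ is the smallest index at which it vanishes. Using the adjacency of every containment in Figure \ref{fig:Tidiag} together with the duality compatibility $\tau(L)^\vee = \tau(L^\vee)$ and the unitary-pairing constraints of Proposition \ref{prop:0and1conditions}, one argues that through the positive regime the $c_i$ must decrease quickly enough to force $c_{a+1} = 0$, giving $m \leq a$. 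Establishing this bound on $m$ is the crux of the lemma; once it is in hand, the reductions above deliver the full chain of inclusions.
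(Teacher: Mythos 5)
Your reductions down to the key inequality are fine, and the outer chain inclusions are correctly seen to be equivalent to $p^{a-1}T_m \subseteq T_m^\vee$, and then via $\tau$-invariance of $T_m$ to $p^{a-1}\tau^k(M_0) \subseteq M_0^\vee$ for all $k$. The problem is what you do next, and your proposed crux is wrong: the bound $m\le a$ is simply \emph{false} in general, so no amount of combinatorial cleverness with the $c_i$ will deliver it. For instance in signature $(1,g-1)$ the index $m$ (the ``type'' in Vollaard's setting) can grow on the order of $g/2$, far exceeding $a=1$, yet the lemma's conclusion (with $a=1$) still holds. The sequence $c_i$ is non-increasing with $c_0=a$ and $c_{m+1}=0$, but nothing forces it to \emph{strictly} decrease at each step --- it can sit at the value $a$ for arbitrarily many indices before dropping to $0$ --- so you cannot conclude $c_{a+1}=0$.

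The reason your route dead-ends is that the termwise estimate $\tau^k(M_0)\subseteq p^{1-k}M_0^\vee$ is far too lossy; it pays one power of $p$ per application of $\tau$, whereas the actual structure only ``loses'' a power of $p$ at the (at most $a$) indices $i$ where the jump $d_i=c_i-c_{i+1}$ is positive. The paper's proof makes exactly this observation: it tracks the $\tau$-stable intersection $\bigcap_{\ell\in\zz}\tau^\ell(T_i)$ as $i$ runs from $m$ down to $-1$, and shows it is unchanged when $d_i=0$ and shrinks by at most a factor of $p$ when $d_i>0$. Since $\sum_{i=0}^m d_i = c_0 - c_{m+1} = a$, at most $a$ of the $d_i$ are positive, which yields $p^a T_m = p^a\bigcap_\ell\tau^\ell(T_m) \subseteq \bigcap_\ell\tau^\ell(T_{-1})$, and the latter is computed to equal $pT_m^\vee$. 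Note this argument never bounds $m$; the bound is on the \emph{number of jumps}, not on the length of the chain. You already have the relation $c_{i+1}=c_i-d_i$ written down --- the missing idea is to convert it into a statement about how the full $\tau$-orbit intersections behave, rather than trying to force $m$ itself to be small.
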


\begin{proof}
Let $M$ and $m$ be as defined in the statement of the lemma. We will refer to the integers $c_i$ and $d_i$ of Figure \ref{fig:Tidiag}. 
In particular, by that diagram, we have the relation
$c_{i+1} = c_i - d_i$
for each $0 \leq i \leq m.$ Applying this successively yields the equation $c_{m+1} = c_0 -  \displaystyle{\sum_{i=0}^m d_i} .$ \\

Recall that $c_0 = a$. By the fact that $T_m$ is $\tau$-invariant, $c_{m+1} = 0$. So, $a = \displaystyle{\sum_{i=0}^m d_i}.$ In particular, because all of the $d_i$ are nonnegative, \emph{at most $a$ of the integers $d_0, \dots, d_m$ are strictly greater than 0.}

If $d_i = 0$, then $\tau(T_{i-1}) = T_i \cap \tau(T_i),$ and so
$$\bigcap_{\ell \in \zz} \tau^\ell(T_i) = \bigcap_{\ell \in \zz} \tau^\ell(T_{i-1}).$$

If $d_i > 0$, then using adjacency we still have  $p ( T_i \cap \tau(T_i) )\subseteq  \tau(T_{i-1})$
and so 
$$ p \bigcap_{\ell \in \zz} \tau^\ell(T_i) \subseteq 
  \bigcap_{\ell \in \zz} \tau^\ell(T_{i-1}).$$

Since  at most $a$ of the integers $d_0, \dots, d_m$ are strictly greater than 0, we have that
\begin{equation}\label{eq:eq1}
    p^a  T_m = p^a \bigcap_{\ell \in \zz} \tau^\ell(T_m) \subseteq   \bigcap_{\ell \in \zz} \tau^\ell(T_{-1}).
\end{equation}

Note that, because $T_m$ is $\tau$-invariant
\begin{align*}
T_m &= \cdots +  \tau^{-1} (T_m) + T_m + \tau(T_m) + \tau^2 (T_m) + \cdots  \\
&= \cdots +  \tau^{-1} (M_0) + M_0 + \tau(M_0) + \tau^2 (M_0) + \cdots. 
\end{align*}

Then,
\begin{align*}
T_m^\vee &= \bigcap_{\ell \in \zz }
 \tau^\ell(M_0)^\vee \\
&= \bigcap_{\ell \in \zz }
 \tau^\ell(M_0^\vee) \\
 &=   \bigcap_{\ell \in \zz }
 \tau^\ell( p^{-1} T_{-1}     ) \\
  &=  p^{-1} \bigcap_{\ell \in \zz }
 \tau^\ell(  T_{-1}     ).\tag{2}\label{eq:eq2}
 \end{align*} 
And so, by \eqref{eq:eq1} and \eqref{eq:eq2}, we have
$$T_m \subseteq  p^{1-a} T_m^\vee.$$

Finally, combining the above with the facts that $p M_0^\vee \subseteq_a M_0$ and $T_m^\vee \subseteq M_0^\vee$,
$$p^a T_m \subseteq pT_m^\vee  \subseteq p M_0 ^\vee \subseteq_a M_0 \subseteq T_m \subseteq  p^{1-a} T_m^\vee. \eqno\qedhere$$
\end{proof}

Repeating the arguments of Lemma \ref{leftlatticelemma} with $M_1$ in place of $M_0$ yields a similar chain condition:
$$p^b S_n \subseteq pS_n^\vee  \subseteq p M_1 ^\vee \subseteq_b M_1 \subseteq S_n \subseteq  p^{1-b} S_n^\vee,$$
which is unfortunately not sufficient for our applications of these lemmas. These relations would combine to produce an isogeny to a superspecial $p$-divisible group of with kernel contained in $X[p^b]$, which is in fact no better (and for almost all signatures, worse) than is already known for $p$-divisible groups without unitary action and polarization.

However, a key observation does produce the chain condition we need, in Lemma \ref{rightlatticelemma} below.

\begin{lemma}\label{rightlatticelemma}
Let $M$ be a supersingular unitary Dieudonn\'e module over $\kk$ of signature $(a,b)$. Let $n$ be minimal such that $S_n$ is $\tau$-invariant. Then,
$$p^a S_n  \subseteq M_1 \subseteq S_n.$$ 
\end{lemma}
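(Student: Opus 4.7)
The plan is to transport the desired inclusion from $(M_1)_{\qq}$ over to $(M_0)_{\qq}$ via the Verschiebung $V$, where Lemma~\ref{leftlatticelemma} has already established the analogous chain. The containment $M_1 \subseteq S_n$ is immediate from the definition of $S_n$, so only $p^a S_n \subseteq M_1$ requires argument.

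First I would establish the two identities $V(M_1) = T_{-1}$ and $V(S_n) = T_{-1} + \tau(T_{-1}) + \cdots + \tau^n(T_{-1})$. The former follows from $V^2 = p\tau^{-1}$ (a direct consequence of $\tau = p^{-1}F^2$ and $VF = p$) combined with $V(M_0^\vee) = M_1$ (obtained by applying $V$ to $F(M_1) = pM_0^\vee$ from Proposition~\ref{prop:0and1conditions}). For the latter, the relation $V\tau = \tau V = F$---another quick consequence of $\tau = p^{-1}F^2$ and $FV = VF = p$---gives $V(\tau^i M_1) = \tau^{i-1}F(M_1) = \tau^{i-1}(pM_0^\vee) = \tau^i(T_{-1})$ for $i \geq 1$. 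Writing $U_n$ for this image, and using that $V$ is a bijection on the isocrystal, the inclusion $p^a S_n \subseteq M_1$ is now equivalent to $p^a U_n \subseteq T_{-1}$.

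Next, two key properties of $U_n$ come into play. The commutation $\tau V = V\tau$ together with the $\tau$-invariance of $S_n$ gives $\tau(U_n) = V(\tau(S_n)) = V(S_n) = U_n$, so $U_n$ (and hence $p^a U_n$) is $\tau$-invariant. Moreover, $T_{-1} \subseteq M_0 \subseteq T_m$ and $T_m$ is $\tau$-invariant, so each $\tau^i(T_{-1})$ sits inside $T_m$ and therefore $U_n \subseteq T_m$. Feeding this into Lemma~\ref{leftlatticelemma} yields
\[ p^a U_n \subseteq p^a T_m \subseteq pM_0^\vee = \tau(T_{-1}), \]
the last equality being the definition $T_{-1} = \tau^{-1}(pM_0^\vee)$.

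The main obstacle is the apparent mismatch between the target $T_{-1}$ and the bound $\tau(T_{-1})$ just obtained---precisely the gap that would force the worse constant $p^b$ if one ran the argument of Lemma~\ref{leftlatticelemma} directly on the $S_i$ chain. The resolution is to exploit $\tau$-invariance one more time: applying $\tau^{-1}$ to the inclusion $p^a U_n \subseteq \tau(T_{-1})$ and using $\tau^{-1}(p^a U_n) = p^a U_n$ gives $p^a U_n \subseteq T_{-1}$. Transporting back by $V^{-1}$ then produces $p^a S_n \subseteq M_1$, completing the proof.
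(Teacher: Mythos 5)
Your proof is correct, and it takes a genuinely different route from the paper. The paper establishes the identity $g_1 = c_1 \leq a$ by applying the dual operation followed by $F^{-1}p$ to the containment $M_0 \subseteq_{c_1} M_0 + \tau(M_0)$, and then \emph{re-runs} the chain combinatorics of Figure~\ref{fig:Sidiag} with $g_1$ in place of $g_0$ to conclude $p^a S_n \subseteq S_0 = M_1$. You instead transport the entire $S$-side chain to the $M_0$-side in one step via $V$: the identities $V(M_1) = T_{-1}$ and $V(\tau^i M_1) = \tau^i(T_{-1})$ identify $V(S_n)$ with $U_n = T_{-1} + \tau(T_{-1}) + \cdots + \tau^n(T_{-1}) \subseteq T_m$, after which the already-proved bound $p^a T_m \subseteq p M_0^\vee = \tau(T_{-1})$ from Lemma~\ref{leftlatticelemma} plus the $\tau$-invariance of $U_n$ deliver $p^a U_n \subseteq T_{-1}$; pulling back along the bijection $V$ finishes. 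Your version is shorter and avoids repeating the inductive chain argument, essentially reducing the $M_1$-statement to the $M_0$-statement that was already proved. The trade-off is that the paper's computation produces, as a byproduct, the exact equality $g_1 = c_1$, which is then reused crucially in the proof of Lemma~\ref{parallel} for the signature $(a,a)$ case; with your proof in place, that equality would still need to be established separately before Lemma~\ref{parallel}. All the individual steps in your argument check out: $V^2 = p\tau^{-1}$, $V\tau = \tau V = F$, $V(M_0^\vee) = M_1$, the $\tau$-invariance of $U_n$, and the final $\tau^{-1}$-application are each valid consequences of the definitions and Proposition~\ref{prop:0and1conditions}.
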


\begin{proof}
Let $M$ and $n$ be as defined in the statement of the lemma. We will refer to the lattice diagram Figure \ref{fig:Sidiag} and the notation $g_i$ and $h_i$ shown there.  

Unlike Lemma \ref{leftlatticelemma}, in this case we have $g_0=b$, not $a$. However, in the proof of that lemma we showed that
$$M_0 \subseteq_{c_1} M_0 + \tau( M_0)$$
with $c_1 \leq a$. Applying the dual followed by the operator $F^{-1}p$ to both sides,
$$ F^{-1}(p M_0^\vee) \cap \tau(F^{-1}(p M_0^\vee) )  \subseteq_{c_1} F^{-1}(pM_0^\vee).$$
Since $M_1 = F^{-1}(p M_0^\vee)$, and by definition of the $S_i$,
$$ S_0 \cap \tau(S_0) \subseteq_{c_1} S_0.$$

That is, $g_1 = c_1 \leq a$, and now we can proceed similarly to the previous lemma. Inductively we have
$$g_{n+1} = g_1 - \sum_{i=1}^n h_i,$$
 and since $g_{n+1} = 0$ and $g_1 \leq a$,
$$\sum_{i=1}^n h_i \leq a,$$
so at most $a$ of the integers $h_1, \dots, h_n$ are greater than zero. Using the fact that all inclusions shown in the diagram  in Figure \ref{fig:Sidiag} are adjacent,
\[p^a  S_n = p^a \bigcap_{\ell \in \zz} \tau^\ell(S_n) 
 \subseteq   \bigcap_{\ell \in \zz} \tau^\ell(S_0)  
 \subseteq S_0. 
\]

Therefore, 
 $$p^a S_n \subseteq M_1 \subseteq S_n.\eqno\qedhere$$
\end{proof}

When $a=b$, the results in Lemma \ref{leftlatticelemma} and Lemma \ref{rightlatticelemma} can be improved slightly.

\begin{lemma}\label{parallel}

Let $M$ be a supersingular unitary Dieudonn\'e module over $\kk$ of signature $(a,a)$. Let $m$ be minimal such that $T_m$ is $\tau$-invariant, and let $n$ be minimal such that $S_n$ is $\tau$-invariant. Then, 
$$p^{a-1} T_m \subseteq  M_0 \subseteq T_m  \quad \text{ and } \quad p^{a-1} S_n  \subseteq M_1 \subseteq S_n.$$ 
\end{lemma}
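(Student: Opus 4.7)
The plan is to refine the chain-descent arguments of Lemmas \ref{leftlatticelemma} and \ref{rightlatticelemma} by establishing the sharper bound $c_1 \leq a - 1$, which is the key new ingredient specific to the $a = b$ setting. Granting this bound, both inclusions follow quickly. For $T_m$, I would run the chain descent of Lemma \ref{leftlatticelemma} truncated at $T_0 = M_0$ rather than at $T_{-1}$: the recursion $c_{i+1} = c_i - d_i$ together with $c_{m+1} = 0$ yields $\sum_{i=1}^m d_i = c_1 \leq a - 1$, so the adjacency argument (exactly as in Lemma \ref{leftlatticelemma}, but restricted to $i \geq 1$) gives $p^{a-1}\bigcap_\ell \tau^\ell(T_m) \subseteq \bigcap_\ell \tau^\ell(T_0) \subseteq M_0$, and $\tau$-invariance of $T_m$ yields $p^{a-1}T_m \subseteq M_0$. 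For $S_n$, the identity $g_1 = c_1$ established inside the proof of Lemma \ref{rightlatticelemma}, combined with $c_1 \leq a-1$, gives $\sum_{i=1}^n h_i = g_1 \leq a-1$, and the same chain argument produces $p^{a-1}S_n \subseteq S_0 = M_1$. The reverse inclusions $M_0 \subseteq T_m$ and $M_1 \subseteq S_n$ are immediate from the definitions.

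To prove $c_1 \leq a - 1$ when $a = b$, I would analyze the tower $M_0 \subseteq T_1 \subseteq T_1^\vee \subseteq M_0^\vee$, where $T_1 = M_0 + \tau(M_0)$ and hence $T_1^\vee = M_0^\vee \cap \tau(M_0)^\vee = M_0^\vee \cap \tau(M_0^\vee)$ by Proposition \ref{prop:0and1conditions}. The inclusion $T_1^\vee \subseteq M_0^\vee$ follows from $M_0 \subseteq T_1$, and the standard duality identity $[L^\vee : M^\vee] = [M : L]$ for $L \subseteq M$ gives $[M_0^\vee : T_1^\vee] = [T_1 : M_0] = c_1$. Since $a = b$ yields $[M_0^\vee : M_0] = a$, summing indices along the tower produces $2c_1 + [T_1^\vee : T_1] = a$. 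Assuming the isotropy $T_1 \subseteq T_1^\vee$ (so that $[T_1^\vee : T_1] \geq 0$), this forces $c_1 \leq a/2 \leq a - 1$ for $a \geq 2$, while for $a = 1$ we get $c_1 = 0$, so the bound $c_1 \leq a-1$ holds uniformly.

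The main obstacle is verifying the isotropy $T_1 \subseteq T_1^\vee$, which reduces to checking $M_0 \subseteq \tau(M_0)^\vee = \tau(M_0^\vee)$, i.e., $\llangle M_0, \tau(M_0) \rrangle \subseteq W(\kk)$. The reverse containment $\llangle \tau(M_0), M_0 \rrangle \subseteq W(\kk)$ is immediate from $\tau(M_0) \subseteq M_0^\vee$ (Proposition \ref{prop:0and1conditions}), so what is needed is the Hermitian/sesquilinear symmetry of $\llangle \cdot, \cdot \rrangle$ under swapping arguments. This symmetry is standard in the unitary Rapoport--Zink setup, but unwinding it from the definition $\llangle x, y \rrangle = \langle x, Fy \rangle$ via the adjointness $\langle Fx, y \rangle = \langle x, Vy \rangle^{\sigma}$ requires care; if the computation turns out to be delicate, an alternative is to invoke the slope-zero structure of $\tau$ on $(M_0)_\qq$ in the supersingular case to impose the isotropy directly on the level of $\tau$-periodic lattice chains.
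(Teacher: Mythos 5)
Your strategy of establishing $c_1 \leq a-1$ and then re-running the chain descent is sound in principle, and the index bookkeeping in the tower $M_0 \subseteq T_1 \subseteq T_1^\vee \subseteq M_0^\vee$ is correctly set up. However, the pivot of the argument — the isotropy $T_1 \subseteq T_1^\vee$ — is not just ``delicate to verify'': it is false, and the bound $c_1 \leq a/2$ it would yield is also false. The form $\llangle \cdot, \cdot \rrangle$ on $(M_0)_\qq$ is not Hermitian under plain swap of arguments. Unwinding the adjointness $\langle F(x), y \rangle = \langle x, V(y) \rangle^\sigma$ gives the $\tau$-twisted identity
$$\llangle \tau(y), x \rrangle = -\llangle x, y \rrangle^\sigma,$$
so $\llangle x, \tau(y) \rrangle \in W(\kk)$ for $x, y \in M_0$ is equivalent to $\tau^2(M_0) \subseteq M_0^\vee$, \emph{not} to the given hypothesis $\tau(M_0) \subseteq M_0^\vee$. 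The containment $\tau^2(M_0) \subseteq M_0^\vee$ does not follow from the axioms: applying $\tau$ to the chain only yields $\tau^2(M_0) \subseteq \tau(M_0)^\vee$, and $\tau(M_0)^\vee \subseteq M_0^\vee$ would require $M_0 \subseteq \tau(M_0)$, which has no reason to hold.

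This can be checked concretely against the paper's own Example \ref{ex:Maa}. Take $a = 3$: the module $M_{(3,3)} = M_{(2,3)} \oplus M_{(1,0)}$ has $c_1 = 2$ (the $M_{(2,3)}$ factor contributes $\tau(e_1) = p^{-1}e_2$, $\tau(e_2) = p^{-1}e_3$, and $\tau(e_i)\in M_0$ for $i \geq 3$, so $T_1/M_0$ has length $2$). Since $2 > 3/2$, the bound $c_1 \leq a/2$ already fails, which forces the signed index $[T_1^\vee : T_1]$ to be negative, i.e.\ $T_1 \not\subseteq T_1^\vee$. One can see the failure directly: $\tau^2(e_1) = p^{-2}e_3$ and $\llangle p^{-2}e_3, e_5 \rrangle = p^{-1}\delta \notin W(\kk)$, so $\tau^2(M_0) \not\subseteq M_0^\vee$. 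Thus the gap is not a missing verification but a missing hypothesis, and the suggested fallback (``invoke the slope-zero structure of $\tau$'') does not supply $\tau^2(M_0) \subseteq M_0^\vee$ either.

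The paper's actual proof avoids any isotropy consideration and instead argues by contradiction: assuming $c_1 = g_1 = a$ (which is the only remaining case after Lemma \ref{rightlatticelemma}), the recursions from the diagrams force $d_0 = h_0 = 0$, and the key algebraic identity $S_{j+1} = F^{-1}(\tau(T_j))$ (a consequence of the vanishing of the earlier $d_i$'s, the duality relations, and $pF^{-1}(M_0^\vee) = M_1$) propagates $d_i = h_i = 0$ inductively for all $i$. This contradicts the finiteness of $m$ and $n$ guaranteed by Lemma \ref{lem:existenceofmandn}. This route is the genuinely new content of the lemma, and there is no substitute for it via duality index counts alone.
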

\begin{proof}
Let $M, m,$ and $n$ be as defined in the statement of the lemma. To achieve the chain conditions above, it suffices to show that  at most $a-1$ of the $d_1, \dots,  d_m$ and at most $a-1$ of the $h_1, \dots, h_n$  are greater than zero. To do this, the arguments in the previous two lemmas show that it is enough to prove that $c_1 \leq a-1$ and $g_1 \leq a-1.$

Assume, for the sake of contradiction, that at least one of $c_1$ or $g_1$ is strictly greater than $a-1$. 
We've already shown in Lemma \ref{rightlatticelemma} that $c_1 = g_1$, and that both of these values are bounded by $a$. So, we must consider the case where $c_1 = g_1 = a$, in the hope of finding a contradiction.

We will show inductively that $d_i = h_i = 0$ for all $i \geq 0$. This is true for $i=0$ by the diagrams in Figures \ref{fig:Tidiag} and \ref{fig:Sidiag}, since we've assumed $c_1=g_1=a$ (and $a=b$, as we're in the parallel signature case).

Now we will show that $h_1 = 0$.  Because $d_0 = 0,$
$$T_0 \cap \tau(T_0) = \tau(T_{-1}).$$
Using the definition of $T_0$ and $T_{-1},$
$$M_0 \cap \tau(M_0) = pM_0^\vee.$$
Taking the dual of both sides yields
$$M_0^\vee + \tau(M_0^\vee)  = p^{-1} M_0^{\vee \vee}.$$
Since, by Proposition \ref{prop:0and1conditions} taking the dual twice is the same as applying $\tau$, 
$$M_0^\vee + \tau(M_0^\vee)  = p^{-1} \tau(M_0).$$
Now we apply the operator $pF^{-1}$ to both sides and use the fact that $pF^{-1}(M_0^\vee) = M_1$,
$$M_1 + \tau(M_1) = F^{-1}( \tau(M_0) ),$$
which is simply notation for the equality
$$S_{1} = F^{-1}(\tau(T_{0})).$$
Then, again using the fact that $d_0 = 0,$
\begin{align*}
S_1 \cap \tau(S_1) &= F^{-1}(\tau(T_{0}) \cap \tau^2(T_0)) \\
&= F^{-1}(\tau^2(T_{-1})) \\
&= \tau F^{-1}( p M_0^\vee) \\
&= \tau(M_1) \\
&= \tau(S_0).
\end{align*}
And so $h_1=0.$ The parallel argument beginning with the fact that $S_0 \cap \tau(S_0) = \tau(S_{-1})$ shows that $d_1=0.$

Now, let $j \geq 1$ and assume that $d_i = f_i = 0$ for all $0 \leq i \leq j$. Because $d_0=0$,
$$T_0 \cap \tau(T_0) \cap \cdots \cap \tau^{j+1} (T_0) = \tau(T_{-1} ) \cap \tau^2(T_{-1}) \cap \cdots \cap \tau^{j+1}(T_{-1}).$$ 
Then, taking the dual and applying the operator $pF^{-1}$ to both sides, 
$$M_1 + \tau(M_1) + \cdots + \tau^{j+1} (M_1) = F^{-1}( \tau(M_0) + \tau^2(M_0) + \cdots + \tau^{j+1}(M_0) ),$$
which is simply notation for the equality
$$S_{j+1} = F^{-1}(\tau(T_{j})).$$
The same argument applied to $j-1$ shows that
$$S_{j} = F^{-1}(\tau(T_{j-1})).$$

Now, using our assumption that $d_{j}=0$,
\begin{align*}
S_{j+1} \cap \tau(S_{j+1}) &= F^{-1}(\tau(T_{j}) \cap \tau^2(T_{j}) ) \\
&= F^{-1}( \tau^2(T_{j-1})) \\
&= \tau(S_j).
\end{align*}

And so $h_{j+1} = 0$. The parallel argument, with the roles of $M_0$ and $M_1$ reversed, shows that $d_{j+1} = 0$.
Inductively, $d_i = 0$ and $h_i = 0$ for all $i$, which is a contradiction to the existence of $n$ and $m$ in Lemma \ref{lem:existenceofmandn}.
\end{proof}

Combining the previous three lemmas produces the following proposition.

\begin{proposition}\label{prop:lambda}
Let $M$ be a supersingular unitary Dieudonn\'e module of signature $(a,b)$ over an algebraically closed field $\kk$. Define
$$\Lambda = M + \tau(M) + \tau^2(M) + \cdots.$$
Then $\Lambda$ is the Dieudonn\'e module of a superspecial p-divisible group over $\kk$, and
$$p^a \Lambda \subseteq M \subseteq \Lambda.$$

Further, when $a=b$,
$$p^{a-1} \Lambda \subseteq M \subseteq \Lambda.$$
\end{proposition}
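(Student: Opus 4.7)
My plan is to split $\Lambda$ along the $M_0 \oplus M_1$ decomposition, identify each summand with one of the stabilized lattices $T_m$ or $S_n$, and then assemble the three preceding lemmas.

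First I would note that since $\tau$ preserves each $(M_i)_\qq$, the sum defining $\Lambda$ respects the decomposition $M = M_0 \oplus M_1$, so
\[
\Lambda \;=\; \Bigl(\sum_{i\geq 0} \tau^i(M_0)\Bigr) \,\oplus\, \Bigl(\sum_{i\geq 0} \tau^i(M_1)\Bigr).
\]
By \Cref{lem:existenceofmandn}, the first ascending union stabilizes at the minimal $m$ for which $T_m$ is $\tau$-invariant, and the second at the minimal $n$ for which $S_n$ is $\tau$-invariant. Hence $\Lambda = T_m \oplus S_n$, and in particular $\tau(\Lambda) = \Lambda$.

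Next I would read off the chain conditions. The inclusion $M \subseteq \Lambda$ is immediate from $M_0 \subseteq T_m$ and $M_1 \subseteq S_n$. For the other inclusion, \Cref{leftlatticelemma} gives $p^a T_m \subseteq pT_m^\vee \subseteq pM_0^\vee \subseteq M_0$, while \Cref{rightlatticelemma} gives $p^a S_n \subseteq M_1$ directly; summing yields $p^a \Lambda \subseteq M$. In the parallel case $a=b$, the strengthened bounds $p^{a-1}T_m \subseteq M_0$ and $p^{a-1}S_n \subseteq M_1$ of \Cref{parallel} plug in verbatim to give $p^{a-1}\Lambda \subseteq M$.

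It then remains to verify that $\Lambda$ is the Dieudonn\'e module of a superspecial $p$-divisible group. Since $\tau = p^{-1}F^2$ on the isocrystal $M_\qq$, we have the commutation $F\tau = \tau F = p^{-1}F^3$. Combined with $F(M_0) \subseteq M_1 \subseteq S_n$, this yields $F(T_m) = \sum_i \tau^i F(M_0) \subseteq S_n$, and symmetrically $F(S_n) \subseteq T_m$, so $F$ preserves $\Lambda$; the same argument works for $V$, while $FV = VF = p$ is inherited from $M$. Finally, $\tau$-invariance of $\Lambda$ rephrases as $F^2\Lambda = p\tau\Lambda = p\Lambda$, i.e.\ $F\Lambda = V\Lambda$, which is exactly the Dieudonn\'e-module criterion for superspeciality.

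The substantive work has already been carried out in the three preceding lemmas, so the main potential obstacle here is bookkeeping: confirming that $\Lambda$ really is a Dieudonn\'e module (not merely a lattice stable under $\tau$) and that the bounds from the two sides combine cleanly. Both points reduce to the commutation $F\tau = \tau F$ together with $\tau$-stability of $T_m$ and $S_n$, so no new computations with the pairing $\llangle\cdot,\cdot\rrangle$ should be needed.
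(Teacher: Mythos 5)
Your proof is correct and follows essentially the same route as the paper: identify $\Lambda_0 = T_m$ and $\Lambda_1 = S_n$, cite Lemmas~\ref{leftlatticelemma}, \ref{rightlatticelemma}, and \ref{parallel} for the chain inclusions, and observe that $F$-/$V$-stability plus $\tau$-invariance give a superspecial Dieudonn\'e module. The only difference is that you spell out the $F$-stability check ($F(T_m)\subseteq S_n$ via $F\tau=\tau F$) in more detail than the paper, which just remarks that $F$ and $V$ commute with $\tau$; both arguments are the same in substance.
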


\begin{proof}
Note that $\Lambda = \Lambda_0 + \Lambda_1$, where 
\begin{align*}
    \Lambda_0 &= M_0 + \tau(M_0) + \tau^2(M_0) + \cdots = T_m,\\
    \Lambda_1 &= M_1 + \tau(M_1) + \tau^2(M_1) + \cdots = S_n,
\end{align*}
and then apply Lemmas \ref{leftlatticelemma} and \ref{rightlatticelemma} to see that
$$p^a \Lambda \subseteq M \subseteq \Lambda.$$

Because the $F$ and $V$ operators commute with $\tau$ and $M$ is stable under $F$ and $V$, $\Lambda$ is also stable under $F$ and $V$. So, $\Lambda$ is the Dieudonn\'e module of a $p$-divisible group. This $p$-divisible group is superspecial, because the condition that $\tau(\Lambda) = \Lambda$ is equivalent to $F \Lambda = V \Lambda$.

When $a=b$, by Lemma \ref{parallel},
$$p^{a-1} \Lambda \subseteq M \subseteq \Lambda.\eqno\qedhere$$
\end{proof}

\begin{corollary}\label{cor:isog}
   Let $X$ be a supersingular unitary p-divisible group of signature $(a,b)$. When $a<b$ there exists a superspecial  $p$-divisible group $\uxx$ and an isogeny $\rho: X \rightarrow  \uxx$ with $\mathrm{Ker}(\rho) \subseteq X[p^a]$.  When $a=b$, there exists an isogeny $\rho: X \rightarrow \uxx$ with $\mathrm{Ker}(\rho) \subseteq X[p^{a-1}]$.
\end{corollary}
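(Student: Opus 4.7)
The plan is to translate Proposition \ref{prop:lambda} into the language of $p$-divisible groups via covariant Dieudonn\'e theory. The key input is that an inclusion of Dieudonn\'e modules (viewed as $W(\kk)$-lattices in a common isocrystal, compatible with $F$ and $V$) corresponds, under the covariant functor used throughout the paper, to an isogeny of $p$-divisible groups whose kernel is the finite flat group scheme with Dieudonn\'e module equal to the quotient.

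First, I would let $M$ denote the unitary Dieudonn\'e module of $X$, and form the lattice $\Lambda = M + \tau(M) + \tau^2(M) + \cdots$ as in Proposition \ref{prop:lambda}. That proposition already verifies that $\Lambda$ is stable under $F$ and $V$ and that the resulting $p$-divisible group, which I would call $\uxx$, is superspecial. Proposition \ref{prop:lambda} also gives the lattice containments $M \subseteq \Lambda$ with $p^a \Lambda \subseteq M$, and with the sharper $p^{a-1}\Lambda \subseteq M$ when $a=b$.

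Next, the inclusion $M \hookrightarrow \Lambda$ of Dieudonn\'e modules produces the desired isogeny $\rho: X \to \uxx$ over $\kk$. Its kernel is the finite flat group scheme with Dieudonn\'e module $\Lambda/M$. Since $p^a \Lambda \subseteq M$ implies $p^a \cdot (\Lambda/M) = 0$, the kernel is annihilated by $p^a$, so $\mathrm{Ker}(\rho) \subseteq X[p^a]$. Likewise, when $a=b$, the inclusion $p^{a-1}\Lambda \subseteq M$ yields $\mathrm{Ker}(\rho) \subseteq X[p^{a-1}]$.

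There is no serious obstacle here: the corollary is essentially a packaging result that repackages Proposition \ref{prop:lambda} through the Dieudonn\'e equivalence. The only point worth flagging is that the statement asks only for a superspecial $p$-divisible group $\uxx$ together with an isogeny $\rho$, and does \emph{not} require $\uxx$ to inherit an $\mathcal{O}_K$-action or polarization from $X$; so no compatibility of $\rho$ with the unitary structure needs to be checked.
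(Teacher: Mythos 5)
Your proposal is correct and matches the paper's proof, which simply invokes Proposition~\ref{prop:lambda} together with covariant Dieudonn\'e theory; you have merely spelled out the standard translation (inclusion of $F,V$-stable lattices $\leftrightarrow$ isogeny with kernel the quotient, and $p^a\Lambda \subseteq M$ forcing $p^a$ to kill the kernel) that the paper leaves implicit.
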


\begin{proof}
This follows from Proposition \ref{prop:lambda} and covariant Dieudonn\'e theory.
\end{proof}

\section{Minimal Heights}\label{sec:heights}

The goal of this section is to translate the results of the previous section into the language of \emph{minimal heights}, and to give a complete description of minimal heights occurring for supersingular unitary $p$-divisible groups of signature $(a,b)$. From \cite{TraversoII}, we have the following definition:

\begin{definition}
Let $X$ be a supersingular $p$-divisible group. The \emph{minimal height} of $X$ is the smallest integer $q_X$ such that there exists an isogeny $\rho: X \rightarrow \uxx$ to a superspecial $p$-divisible group, with $\ker(\rho) \subseteq X[p^{q_X}]$.
\end{definition}

As noted in \cite{TraversoII}, the minimal height of $X$ can also be thought of as a ``minimal distance" to a superspecial $p$-divisible group.

\begin{remark}
It is natural to ask: If we are only concerned with $p$-divisible groups $X$ with unitary structure of signature $(a,b)$, why not also require $\uxx$ to have the same structure, and for $\rho$ to respect both the action and the polarization? 

In fact, because the $p$-divisible group of an elliptic curve admits action and polarization of signature $(1,0)$ (this uses the fact that $K$ is the unramified degree two extension), any superspecial $\uxx$ can be endowed with action and polarization of any signature $(a,b),$ as long as $a+b = \mathrm{dim}(\uxx).$ However, we don't require $\rho$ to respect this extra structure, especially the polarization, for two reasons: first, experience with the use of lemmas similar Lemma \ref{leftlatticelemma} in the study of the geometry of Rapoport-Zink spaces tells us that our formulation is more likely to be helpful. Second, our definition is the one necessary for the applications to isogeny cutoffs in  Section \ref{sec:isogenythm}.

\end{remark}

\begin{lemma}\label{lem:minht}
Let $X$ be a supersingular unitary $p$-divisible group of signature $(a,b)$ over an algebraically closed field, with associated Dieudonn\'e module $M$. Let $q_X$ be the minimal height of $X$.

With $\Lambda = M + \tau(M) + \tau^2(M) + \cdots $, the integer $q_X$ can also be characterized as the minimal integer $r_M$ such that
$$p^{r_M} \Lambda \subseteq M \subseteq \Lambda.$$
\end{lemma}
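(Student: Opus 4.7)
The plan is to translate the geometric statement about isogenies into a lattice-theoretic statement via covariant Dieudonné theory, and then to use Proposition \ref{prop:lambda} together with the universality of $\Lambda$ to match the two characterizations.

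First I would set up the dictionary. Under covariant Dieudonné theory, isogenies $\rho\colon X\to\uxx$ correspond bijectively to inclusions $M\subseteq N$ of Dieudonné modules inside $M_{\qq}$, and the condition $\mathrm{Ker}(\rho)\subseteq X[p^q]$ translates to $p^qN\subseteq M\subseteq N$. The superspecial condition on $\uxx$ translates to $FN=VN$ on the Dieudonné side, which, since $\tau = V^{-1}F$, is equivalent to $\tau(N)=N$. Thus
\[
q_X=\min\bigl\{q\ge 0 \,\bigm|\, \exists\text{ a Dieudonn\'e module }N\subseteq M_{\qq}\text{ with }\tau(N)=N,\ p^qN\subseteq M\subseteq N\bigr\}.
\]

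Next I would prove the inequality $q_X \le r_M$. By Proposition \ref{prop:lambda}, $\Lambda$ is itself a Dieudonn\'e module (stable under $F$ and $V$), and by construction $\tau(\Lambda)=\Lambda$, so $\Lambda$ corresponds to a superspecial $p$-divisible group $\uxx$. The inclusion $p^{r_M}\Lambda\subseteq M\subseteq \Lambda$ then supplies an isogeny $\rho\colon X\to\uxx$ with kernel in $X[p^{r_M}]$, showing $q_X\le r_M$.

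For the reverse inequality $r_M\le q_X$, I would use the universal (initial) property of $\Lambda$ among $\tau$-stable overlattices of $M$. Given any overlattice $N\supseteq M$ with $\tau(N)=N$, applying $\tau^i$ to the inclusion $M\subseteq N$ yields $\tau^i(M)\subseteq N$ for all $i\ge 0$, hence $\Lambda=\sum_{i\ge 0}\tau^i(M)\subseteq N$. If moreover $p^{q_X}N\subseteq M$, then $p^{q_X}\Lambda\subseteq p^{q_X}N\subseteq M$, which forces $r_M\le q_X$.

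The two inequalities combine to give $q_X=r_M$, which is exactly the claim. The only step requiring any real content is the identification of $\Lambda$ as a Dieudonn\'e module, which is already established in Proposition \ref{prop:lambda}; everything else is the standard Dieudonn\'e dictionary, so I don't anticipate any substantive obstacle.
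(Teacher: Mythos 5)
Your proof is correct and takes essentially the same approach as the paper: both directions translate the isogeny data into lattice inclusions via covariant Dieudonné theory, use that $\Lambda$ is a $\tau$-stable Dieudonné module to get $q_X\le r_M$, and use that any $\tau$-stable overlattice $N\supseteq M$ (the $M_{\uxx}$ of the paper) must contain $\Lambda$ to get $r_M\le q_X$.
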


\begin{proof}
By covariant Dieudonn\'e theory, the inclusions $p^{r_M} \Lambda \subseteq M \subseteq \Lambda$ define an isogeny $\rho_{\Lambda}: X \rightarrow X_{\Lambda}$ to the superspecial $p$-divisible group $X_{\Lambda}$, with the property that $\ker(\rho_{\Lambda}) \subseteq X[p^{r_M}]$.  So, $q_X \leq r_M$. 

On the other hand, by definition of $q_X$, there exists a superspecial $p$-divisible group $\uxx$ and an isogeny $\rho: X \rightarrow \uxx$ with $\ker(\rho) \subseteq X[p^{q_X}]$. If $M_{\uxx}$ is the $p$-adic Dieudonn\'e module of $\uxx$, this gives an inclusion
$$M \subseteq M_{\uxx}$$
with the property that $p^{q_X} M_{\mathbb{X}} \subseteq M \subseteq M_{\mathbb{X}}$. 

Since $\uxx$ is superspecial, $F M_{\uxx} = V M_{\uxx}$ and so $\tau(M_{\uxx}) = M_{\uxx}$. But then,
$$M \subseteq M  + \tau(M) + \tau^2(M) + \cdots \subseteq M_{\uxx},$$
which is simply the notation for the inclusions
$$M \subseteq \Lambda \subseteq M_{\uxx}.$$

In particular, $p^{q_X} \Lambda \subseteq p^{q_X} M_{\mathbb{X}} \subseteq M \subseteq \Lambda \subseteq M_{\mathbb{X}}$. By the minimality of $r_M$, we have $q_X \geq r_M$. Thus, $q_X=r_M$. 
\end{proof}

It follows immediately from the above lemma that if $X_1$ and $X_2$ are two supersingular $p$-divisible groups over $\kk$, of minimal height $q_1$ and $q_2$, then the minimal height of $X_1 \times_\kk X_2$ is $\mathrm{max}(q_1, q_2)$.

We will now give a series of examples of specific $p$-divisible groups and their minimal heights. These examples will be used in Theorem \ref{thm:allabtminht}.

\begin{remark}
In the  Example \ref{ex:oddMab} below, when $a+b$ is odd, we'll construct a $p$-divisible group of signature $(a,b)$ with minimal height $a$. Though not necessary for the results in this paper, the reader may find the reasoning behind this construction interesting: as we will observe in Theorem \ref{thm:allabtminht}, this is the largest possible minimal height. We know from \cite[Theorem 1.1]{TraversoII} that ``having largest possible minimal height" is an open condition, so to create our example we needed to find a suitably generic $p$-divisible group. We found such a candidate by choosing a supersingular $p$-divisible group $X$ with the property that the Ekedahl-Oort stratum defined by $X[p]$ in the relevant unitary Shimura variety has dimension larger than the supersingular locus, so this $X$ was likely to be generic in other aspects as well.
\end{remark}

\begin{example}\label{ex:oddMab}

Consider $a$ and $b$ with $a+b = g$ odd, with the convention that $a \leq  b$. (Though, since $a+b$ is odd, this immediately implies $a < b$.)  For notational convenience, set $r = \frac{g-1}{2}$. In this example, we construct a supersingular unitary $p$-divisible group of signature $(a,b)$ over $\kk$,  with a minimal height of $a$.

To define the unitary Dieudonn\'e module $M_{(a,b)} = (M_{(a,b)}, F, V,  \langle  \cdot ,  \cdot \rangle, M_{(a,b)} = M_0 \oplus M_1 ) $, we set $M_0 = \mathrm{Span}_{W(\kk) }  \{ e_1, \dots, e_g \} $ and $M_1 = \mathrm{Span}_{W(\kk) } \{  f_1, \dots, f_g \}$, and define $M_{(a,b)} \coloneqq M_0 \oplus M_1$. 

Let the $F$ operator be extended $\sigma$-semilinearly from the conditions:
\begin{center}
\begin{tabular}{ l  l  l l  } 
 $F(e_i) = f_i$ &  &  &  $1 \leq i \leq a $, \\ 
  $F(e_j) = p f_j $ &  &  &  $a + 1 \leq j \leq g $, \\ 
   $F(f_j) = p e_{j+1}$ &  &  &  $r+1 \leq j \leq r+ a $, \\ 
    $F(f_i) = e_{i+1} $ &  &  &  otherwise, \\ 
\end{tabular}
\end{center}
and $V$ be defined as $pF^{-1}$.

Note that, by construction, 
$$pM_0 \subseteq_b FM_1 \subseteq_a M_0 \quad \text{  and  } \quad  pM_1 \subseteq_a FM_0 \subseteq_b M_1  .$$

We define the pairing
$\langle \cdot , \cdot \rangle: M_{(a,b)} \times M_{(a,b)} \rightarrow W(\kk)$ by choosing $ \delta \in W(\kk)^\times$ such that $\delta^\sigma = - \delta$, and setting
$$\langle e_i, f_{r+i} \rangle = \delta$$
for all $1 \leq i \leq g$, and $\langle e_i, f_j \rangle = 0$ otherwise, declaring $M_0$ and $M_1$ to both be totally isotropic with respect to $\langle \cdot , \cdot \rangle$, and setting $\langle f_j, e_i \rangle = - \langle e_i, f_j \rangle$. 

The fact that this pairing has the property that $\langle F(x), y \rangle = \langle x, V(y) \rangle^\sigma$ can be checked as a matrix computation:
$A_F^T B = (BA_V)^\sigma,$
where $A_F$ and $A_V$ are the matrices defining $F$ and $V$ on the basis $\{ e_1, \dots, e_g, f_1, \dots, f_g\}$, and $B$ is the matrix of the form $\langle \cdot, \cdot \rangle.$ 

However, note that it is essential that $g$ is odd. To see this clearly,  let  $k = \lfloor \frac{g}{2} \rfloor - a$, and observe that if we want $\langle \cdot, \cdot \rangle$ to have the $F$ and $V$ compatibility above, then
\begin{align*}
\delta &= 
\langle e_1, f_{r+1} \rangle \\
&= \langle F^g p^{-k} f_{r+1}, f_{r+1} \rangle \\
&= \langle  p^{-k} f_{r+1}, V^g f_{r+1} \rangle^{\sigma^g}\\
&= \langle  p^{-k} f_{r+1}, p^k e_1 \rangle^{\sigma^g}  \\
&= \langle f_{r+1}, e_1 \rangle^{\sigma^g} \\
&= (-\delta)^{\sigma^g},
\end{align*}
so $g$ must be odd.

To show that $M_{(a,b)}$ is supersingular, we use \cite[Lemma 6.12]{zink68cartier} to compute the first slope of the isocrystal $(M_{(a,b)})_{\qq}$. For any positive integer $m$,
\[F^{2gm}(M_{(a,b)}) = p^{gm}M_{(a,b)}, \]
and so 
\[ \lim_{n \to \infty} \tfrac{1}{n}\max\{k \in \mathbx{Z}: F^nM_{(a,b)} \subseteq p^kM_{(a,b)}\} = \tfrac{1}{2}, \]
and by  \cite[Lemma 6.12]{zink68cartier} the first slope of $(M_{(a,b)})_{\qq}$ is $\frac{1}{2}$. But since the Newton polygon has nondecreasing slopes, and goes from $(0,0)$ to $(g, 2g)$, all slopes must be equal to $\frac{1}{2}$.

We've shown that $M_{(a,b)}$ is a supersingular unitary Dieudonn\'e module of signature $(a,b)$. Let $X_{(a,b)}$ be the corresponding p-divisible group.

Finally, we compute the minimal height of $X_{(a,b)}$. Let $\Lambda_{(a,b)} = M_{(a,b)} + \tau(M_{(a,b)}) + \tau^2 M_{(a,b)} + \cdots$. By definition of $F$ and $V$ on $M_{(a,b)}$, 
$$\tau^a(e_1) = p^{-a} e_{a+1},$$
and so while $p^a \Lambda \subseteq M_{(a,b)} \subseteq \Lambda$, it is not true that $p^{a-1} \Lambda \subseteq M_{(a,b)}$. Then, by Lemma \ref{lem:minht}, the minimal height of $X_{(a,b)}$ is $a$.

\end{example}

\begin{example}\label{ex:evenMab}
Consider $a$ and $b$ with $a+b$ even, and $a < b$. In this example, we construct a supersingular unitary $p$-divisible group  of signature $(a,b)$ over $\kk$,  with a minimal height of $a$.

Since $a < b$, and $a+b$ is even, it is also true that $a < b-1$. So, we can consider $M_{(a,b-1)}$ (with its additional structure) and define $M_{(0,1)}$ as in Example \ref{ex:oddMab}. We then define $M_{(a,b)}$ as
$$M_{(a,b)} = M_{(a,b-1)} \oplus M_{(0,1)}$$
with the $F$ and $V$ maps and the polarization defined as a product as well.

Note that $M_{(a,b)}$ is a unitary $p$-adic Dieudonn\'e module of signature $(a,b)$ because it has been constructed as a product of $p$-adic Dieudonn\'e modules of signatures $(a,b-1)$ and $(0,1)$. Since $M_{(a,b)}$  is a product of supersingular Dieudonn\'e modules, it is also supersingular.

Let $X_{(a,b)}$ be the $p$-divisible group associated to $M_{(a,b)}$. By construction,
$$X_{(a,b)} \cong X_{(a,b-1)} \times_{\kk} X_{(0,1)}.$$
Note that, by the previous example, $X_{(a,b-1)}$ has minimal height $a$. Since $F M_{(0,1)} = V M_{(0,1)}$, the $p$-divisible group $A_{(0,1)}$ is superspecial, so has minimal height 0. Then, $X_{(a,b)}$ has minimal height $\mathrm{max}(a,0) = a$.

\end{example}

\begin{example}\label{ex:Maa}
In this example, we construct a supersingular unitary $p$-divisible group  of signature $(a,a)$ over $\kk$,  with a minimal height of $a-1$.

Let $ M_{(a-1,a)}$ be the $p$-adic Dieudonn\'e module of signature $(a-1,a)$ constructed in Example \ref{ex:oddMab}. Define $M_{(1,0)}$ exactly as $M_{(0,1)}$ of Example \ref{ex:oddMab}, but with $(M_{(1,0)})_0 = (M_{(0,1)})_1$ and $(M_{(1,0)})_1 = (M_{(0,1)})_0$. As a result, $M_{(1,0)}$ is a superspecial p-adic unitary Dieudonn\'e module of signature $(1,0)$. 

Now, define $M_{(a,a)}$ as the product
$$M_{(a,a)} \coloneqq M_{(a-1,a)} \oplus M_{(1,0)},$$
equipped with product $F$ and $V$ operators and product polarization. By construction, $M_{(a,a)}$ is a supersingular unitary Dieudonn\'e module of signature $(a,a)$. Let $X_{(a,a)} \cong  X_{(a-1,a)} \times_{\kk} X_{(1,0)} $ be the corresponding $p$-divisible group. Since $ X_{(a-1,a)}$ has minimal height $a-1$ and $X_{(1,0)}$ has minimal height 0, it follows that $ X_{(a,a)} $ has minimal height $a-1$.

\end{example}

The three previous examples combine to construct a supersingular $p$-divisible group $X_{(a,b)}$ of signature $(a,b)$, for any signature (with the convention that $a \leq b$). When $a < b$, this $p$-divisible group has minimal height $a$. When $a=b$, the minimal height is $a-1$.

\begin{theorem}\label{thm:allabtminht}
   Let $a$ and $b$ be nonnegative integers, with the convention that $a \leq b$. If $a < b$, then for any supersingular unitary $p$-divisible group $X$ over $\kk$ of signature $(a,b)$, the minimal height of $X$ is at most $a$. Furthermore, for any $0\leq q \leq a$, there exists a supersingular unitary $p$-divisible group over $\kk$ of signature $(a,b)$ with minimal height exactly $q$. 
    
    If $a=b$, the minimal height of any supersingular unitary $p$-divisible group over $\kk$ of signature $(a,a)$ is at most $a-1$. Furthermore, for any $0\leq q \leq a-1$, there exists a supersingular unitary $p$-divisible group over $\kk$ of signature $(a,a)$ with minimal height exactly $q$.
\end{theorem}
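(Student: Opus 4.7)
The upper bound on the minimal height is immediate. By Corollary~\ref{cor:isog}, every supersingular unitary $p$-divisible group $X$ of signature $(a,b)$ admits an isogeny to a superspecial $p$-divisible group with kernel contained in $X[p^a]$ when $a<b$, and in $X[p^{a-1}]$ when $a=b$. The definition of minimal height (equivalently, the characterization in Lemma~\ref{lem:minht}) then gives $q_X \leq a$, respectively $q_X \leq a-1$.

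For the realization of each intermediate value of $q$, the plan is to combine the $p$-divisible groups from Examples~\ref{ex:oddMab}, \ref{ex:evenMab}, and~\ref{ex:Maa} with superspecial factors of complementary signature. Two facts make this assembly routine: by the observation following Lemma~\ref{lem:minht}, the minimal height of a product of supersingular $p$-divisible groups is the maximum of the minimal heights of the factors; and the signature of a product of unitary $p$-divisible groups is the coordinatewise sum of signatures. For any $c \geq 0$, a superspecial unitary $p$-divisible group of signature $(c,c)$ is supplied by $(X_{(1,0)} \times_{\kk} X_{(0,1)})^{\times c}$, which has minimal height $0$ since $F$ and $V$ agree on each $X_{(1,0)}$ and each $X_{(0,1)}$.

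In the case $a < b$, for each $1 \leq q \leq a$, I take the $p$-divisible group of signature $(q, q + (b-a))$ produced by Example~\ref{ex:oddMab} or Example~\ref{ex:evenMab} (the choice being dictated by the parity of $2q + b - a$); by construction, it has minimal height $q$. Multiplying by $(X_{(1,0)} \times_{\kk} X_{(0,1)})^{\times (a-q)}$ yields total signature $(a,b)$ and minimal height $\max(q,0) = q$. For $q = 0$, the product $(X_{(1,0)} \times_{\kk} X_{(0,1)})^{\times a} \times_{\kk} X_{(0,1)}^{\times (b-a)}$ is a superspecial unitary $p$-divisible group of signature $(a,b)$. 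Similarly, in the case $a = b$, for $1 \leq q \leq a-1$, take $X_{(q+1, q+1)}$ from Example~\ref{ex:Maa}, which has minimal height $q$, and multiply by $(X_{(1,0)} \times_{\kk} X_{(0,1)})^{\times (a-q-1)}$ to obtain total signature $(a,a)$ and minimal height $q$. For $q = 0$, the superspecial $(X_{(1,0)} \times_{\kk} X_{(0,1)})^{\times a}$ is of signature $(a,a)$.

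The substantive work lies in the examples already constructed in Section~\ref{sec:heights} and in Corollary~\ref{cor:isog}; once those are in hand, the proof of the theorem reduces to the verification above that products behave additively on signatures and maximally on minimal heights. The only mild bookkeeping is the case split on the parity of $2q + b - a$ to decide which of Examples~\ref{ex:oddMab} and~\ref{ex:evenMab} supplies the nontrivial factor, so I do not expect any serious obstacle beyond this assembly step.
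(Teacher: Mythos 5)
Your proof is correct and follows the same overall strategy as the paper's: upper bounds via Corollary~\ref{cor:isog}, and realization of each intermediate minimal height by taking a product of an example from Section~\ref{sec:heights} with superspecial padding factors (using that minimal heights are maximized over factors and signatures add coordinatewise). The only difference is a minor bookkeeping choice of padding: the paper keeps the second coordinate fixed at $b$ and uses $X_{(q,b)} \times_{\kk} X_{(1,0)}^{\times(a-q)}$ (and analogously $X_{(q,a)} \times_{\kk} X_{(1,0)}^{\times(a-q)}$ when $a=b$), padding with signature-$(1,0)$ factors only, whereas you keep the difference $b-a$ fixed and pad with signature-$(1,1)$ pairs $X_{(1,0)}\times_{\kk} X_{(0,1)}$; both are valid and equally easy.
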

    
\begin{proof}

First consider the case where $a < b.$ 
By the first statement in Corollary \ref{cor:isog}, the minimal height of any supersingular unitary $p$-divisible group $X$ over $\kk$ of signature $(a,b)$ is at most $a$.

By Examples \ref{ex:oddMab} and \ref{ex:evenMab} (depending on the parity of $a+b$), there exists a supersingular unitary $p$-divisible group $X_{(a,b)}$ of signature $(a,b)$, with minimal height $a$. Extending these examples, for any $0 \leq q \leq a$,
$$X_{(q,b)} \times_\kk \underbrace{X_{(1,0)} \times_\kk X_{(1,0)} \times_\kk \cdots \times_\kk X_{(1,0)}}_{\text{$a-q$ times}}$$
is a supersingular unitary  $p$-divisible group that has a minimal height of $q$.

Now consider the case where $a=b$. By the second statement in Corollary \ref{cor:isog}, the minimal height of any supersingular unitary $p$-divisible group $X$ over $\kk$ of signature $(a,a)$ is at most $a-1$.

In Example \ref{ex:Maa}, we constructed a supersingular unitary $p$-divisible group $X_{(a,a)}$ of signature $(a,a)$, with minimal height $a-1$. Extending this example, for any $0 \leq q \leq a-1$
$$X_{(q,a)} \times_\kk \underbrace{X_{(1,0)} \times_\kk X_{(1,0)} \times_\kk \cdots \times_\kk X_{(1,0)}}_{\text{$a-q$ times}}$$
is a supersingular unitary $p$-divisible group of  minimal height $q$.
\end{proof}

\section{Isogeny Cutoffs}\label{sec:isogenythm}

In this section, we apply the results of the previous section to understand \emph{isogeny cutoffs} of supersingular unitary $p$-divisible groups.

\begin{definition}
The \emph{isogeny cutoff} of a $p$-divisible group $X$ is the minimal non-negative integer $b_X$ such that, for any other $p$-divisible group $X'$, an isomorphism $X[p^{b_X}] \cong X'[p^{b_X}]$ implies the existence of an isogeny $X \rightarrow X'$. In other words, $b_X$ is the smallest level of $p$-power torsion that determines $X$ up to isogeny. 
\end{definition}

\begin{definition}
Fix nonnegative integers $a$ and $b$, with the convention that $a \leq b$. We define the \emph{signature $(a,b)$ supersingular isogeny bound} to be the least $B_{(a,b)}$ such that $b_X \leq B_{(a,b)}$ for all  supersingular unitary $p$-divisible groups $X$ of signature $(a,b)$, defined over any algebraically closed field $\kk$ of characteristic $p$.
\end{definition}

\begin{remark}
If $X$ is a unitary $p$-divisible group of fixed signature $(a,b)$, it also makes sense to define the \emph{enhanced isogeny cutoff} $\widetilde{b}_X$, as the minimal non-negative integer such that,  for any other unitary $p$-divisible group $X'$ also of signature $(a,b)$, an isomorphism $X[p^{\widetilde{b}_X}] \cong X'[p^{\widetilde{b}_X}]$ \emph{respecting the action and polarization on both $X$ and $X'$}, implies the existence of an isogeny $X \rightarrow X'$. Note that then $\widetilde{b}_X \leq b_X$. One can similarly define the \emph{enhanced signature $(a,b)$ supersingular isogeny bound} $\widetilde{B}_{(a,b)}$, and then $\widetilde{B}_{(a,b)} \leq B_{(a,b)}$. 

However, note that by \cite[Lemma 5.1]{VollaardWedhorn} the values of $\widetilde{b}_X$ and $\widetilde{B}_{(a,b)}$ do not depend on whether or not we require the isogeny $X \rightarrow X'$ to respect the unitary action and polarization.
\end{remark}

We will now construct an explicit example of a unitary $p$-divisible group with a large isogeny cutoff.

\begin{example}\label{ex:Mk}
Let $k \geq 1$, and consider $(a,b)$ with $a+b = g$ odd. We will construct a $p$-divisible group $\prescript{k}{}{X_{(a,b)}}$ such that $\prescript{k}{}{X_{(a,b)}}[p^k] \cong X_{(a,b)}[p^k]$ (where $X_{(a,b)}$ is the $p$-divisible group constructed from $(M_{(a,b)}, F, V, \langle \cdot, \cdot \rangle, M_{(a,b)} = M_0 \oplus M_1)$ 
  in Example \ref{ex:oddMab}).

As before, set $r = \frac{g-1}{2}$. Let $\prescript{k}{}{M_{(a,b)}}  = (M_{(a,b)}, F_k, V_k, \langle \cdot, \cdot \rangle, M_{(a,b)} = M_0 \oplus M_1)$, so all the data of $\prescript{k}{}{M_{(a,b)}}$ agrees with that of $M_{(a,b)}$ in Example \ref{ex:oddMab}, except for $F_k$ and $V_k$, which we alter from $F$ and $V$ by the following four changes:
\begin{align*}
    F_k (f_a) &= e_{a+1} + p^k e_1, \\
    F_k(e_{r+1}) &= p f_{r+1} - p^{k+1} f_{r+a+1},\\
    V_k(e_{a+1}) &= p f_a - p^{k+1} f_g, \text{ and}\\
    V_k(f_{r+1}) &= e_{r+1} + p^k e_{r+a+1}.
\end{align*}

The property that $\langle F_k(x), y \rangle = \langle x, V_k(y) \rangle^\sigma$ can again be checked as a matrix computation:
$A_{F_k}^T B = (BA_{V_k})^\sigma.$ By construction, the action on $\prescript{k}{}{M_{(a,b)}}$ is still of signature $(a,b)$, we have $F_k \circ V_k = V_k \circ F_k = p$, and $F \equiv F_k \mod p^k$ and $V \equiv V_k \mod p^k.$ 

Therefore, for each $k \geq 1$, we've constructed a unitary $p$-divisible group $\prescript{k}{}{X_{(a,b)}}$ of signature $(a,b)$, such that $\prescript{k}{}{X_{(a,b)}}[p^k] \cong X_{(a,b)}[p^k]$. In fact, this isomorphism respects the induced action and polarization on both $\prescript{k}{}{X_{(a,b)}}[p^k]$ and $X_{(a,b)}[p^k]$, since by construction the splitting of Dieudonn\'e modules into 0 and 1 parts was identical, and the polarization form on both Dieudonn\'e modules was also identical. Note that we do not claim that $\prescript{k}{}{X_{(a,b)}}$ is supersingular (and for small $k$ it will not be, as seen in the following lemma).

\end{example}

\begin{lemma}\label{lem:Mkslope}

Consider $(a,b)$ with $a+b = g$ odd, with the convention that $a \leq b$, and let $1 \leq k \leq (a-1)$. Then the first Newton slope of $\prescript{k}{}{X_{(a,b)}}$ is at most $\frac{k}{2a}$. In particular $\prescript{k}{}{X_{(a,b)}}$ is not supersingular. 

\end{lemma}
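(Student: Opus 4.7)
My plan is to apply Zink's criterion \cite[Lemma 6.12]{zink68cartier} to the element $e_1$: the first Newton slope of the isocrystal $(\prescript{k}{}{M_{(a,b)}})_{\qq}$ equals
\[
\lim_{n \to \infty} \tfrac{1}{n} \max\{j : F_k^n\, \prescript{k}{}{M_{(a,b)}} \subseteq p^j\, \prescript{k}{}{M_{(a,b)}}\},
\]
so producing a single vector along which $v_p(F_k^n(-))$ grows no faster than $\tfrac{k}{2a}\cdot n$ will suffice.

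I would first trace the $F_k$-orbit of $e_1$ for $2a$ steps. Since $a \leq b$ and $a+b$ is odd force $r \geq a$, neither modification site $e_{r+1}$ nor $f_a$ is encountered before step $2a-1$; in particular, an easy induction gives $F_k^{2i}(e_1) = e_{i+1}$ for $0 \leq i \leq a-1$. Then $F_k^{2a-1}(e_1) = f_a$, and applying the modified step $F_k(f_a) = e_{a+1} + p^k e_1$ yields
\[
F_k^{2a}(e_1) = e_{a+1} + p^k e_1.
\]
All matrix entries of $F_k$ lie in $\zz_p$ and are therefore $\sigma$-invariant, so $F_k^{2a}$ commutes with multiplication by $p^k$, and an induction on $m$ gives
\[
F_k^{2am}(e_1) \;=\; p^{mk} e_1 \;+\; \sum_{i=0}^{m-1} p^{ik}\, F_k^{2a(m-1-i)}(e_{a+1}).
\]

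The main obstacle is the bound $v_p\bigl(F_k^{2am}(e_1)\bigr) \leq mk$ for all $m \geq 1$. I would establish this by showing that the $e_1$-coefficient of $F_k^{2am}(e_1)$ has $p$-valuation exactly $mk$: the leading contribution $p^{mk}$ comes from the first summand, and each correction $p^{ik}\cdot [e_1\text{-coefficient of }F_k^{2aj}(e_{a+1})]$ (with $j = m-1-i$) must contribute at $p$-valuation strictly greater than $mk$. Using the congruence $F_k \equiv F \pmod{p^k}$ together with the explicit cyclic structure of the unmodified supersingular operator $F$ on $M_{(a,b)}$---in which the $F^2$-orbit of $e_{a+1}$ in $M_0$ does not return to $e_1$ until after many steps---one can prove by induction on $j$ the auxiliary bound $v_p\bigl([e_1\text{-coefficient of }F_k^{2aj}(e_{a+1})]\bigr) \geq (j+1)k + 1$ for all $j \geq 0$. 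The inductive step uses the expansion $F_k^{2a(j+1)}(e_{a+1}) = \sum_i \alpha_i^{(j)} F_k^{2a}(e_i)$ and the fact that the only basis element whose image under $F_k^{2a}$ has a nonzero $e_1$-coefficient of low valuation is $e_1$ itself (contributing precisely $p^k$).

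With $v_p(F_k^{2am}(e_1)) \leq mk$ in hand, Zink's criterion gives
\[
\text{first slope of } \prescript{k}{}{X_{(a,b)}} \;\leq\; \lim_{m \to \infty} \frac{v_p(F_k^{2am}(e_1))}{2am} \;\leq\; \frac{k}{2a}.
\]
For the final assertion, any supersingular $p$-divisible group has all Newton slopes equal to $\tfrac{1}{2}$; since $1 \leq k \leq a-1$ forces $\tfrac{k}{2a} \leq \tfrac{a-1}{2a} < \tfrac{1}{2}$, the first slope of $\prescript{k}{}{X_{(a,b)}}$ is strictly less than $\tfrac{1}{2}$, and hence $\prescript{k}{}{X_{(a,b)}}$ is not supersingular.
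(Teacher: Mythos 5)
Your overall strategy matches the paper's: both invoke Zink's slope criterion \cite[Lemma 6.12]{zink68cartier} and track the image of $e_1$ under powers of $F_k$. The computation $F_k^{2a}(e_1) = e_{a+1} + p^k e_1$ and the resulting recursion are correct (and $\sigma$-semilinearity is handled properly since $p$ is $\sigma$-fixed), and the auxiliary valuation bound you state for the $e_1$-coefficient of $F_k^{2aj}(e_{a+1})$ is in fact true.

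The gap is in the inductive proof you sketch for that auxiliary bound. Writing $F_k^{2aj}(e_{a+1}) = \sum_i \alpha_i^{(j)} e_i$, your inductive step expands
$F_k^{2a(j+1)}(e_{a+1}) = \sum_i (\alpha_i^{(j)})^{\sigma^{2a}} F_k^{2a}(e_i)$
and then isolates the $i=1$ term because $F_k^{2a}(e_1)|_{e_1} = p^k$. But the other terms $(\alpha_i^{(j)})^{\sigma^{2a}}\, F_k^{2a}(e_i)\big|_{e_1}$ for $i \neq 1$ do not vanish in general — for instance $F_k^{2a}(e_{g+1-a})$ has a nonzero $e_1$-coefficient coming from the unmodified cycle, and further contributions arise through the $D$-arrow at $e_{r+1}$. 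To bound these you would need control on $v_p(\alpha_i^{(j)})$ for \emph{all} $i$, not merely $i=1$, so the induction as stated does not close: the inductive hypothesis is strictly weaker than what the inductive step consumes. (A secondary issue is the claim that $F_k^{2a}(e_i)|_{e_1}$ has ``high'' valuation for all $i \neq 1$; this needs to be made quantitative and checked against the $D$-shortcuts in Figure \ref{fig:Fkdiag}.)

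The paper sidesteps this by decomposing $F_k = F + D$ and expanding $F_k^{2am}(e_1)$ as a sum over words in $F$ and $D$. It then shows that the only words contributing to the relevant coefficient are of the form $F^{2a}W'$ where $W'$ is a composition of three explicit ``loops'' at $e_1$, namely $W_1 = DF^{2a-1}$, $W_2 = F^{g-2a}DF^{2r}$, and $W_3 = F^{2g}$, with respective slope contributions $\tfrac{k}{2a}$, $\tfrac{k+g-2a}{2g-2a}$, and $\tfrac{1}{2}$. Comparing these three slopes identifies $W_1^{m-1}$ as the unique minimal-valuation contribution in one stroke, making the bookkeeping that your induction struggles with immediate. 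Your recursion is essentially the $W_1$-part of this word expansion; what is missing is a clean argument that \emph{everything else} has strictly higher valuation, which is exactly what the classification into $W_1, W_2, W_3$ provides. If you rewrite your auxiliary bound in the language of this word decomposition (every path from $e_{a+1}$ to $e_1$ is a ``direct'' segment followed by loops built from $W_1, W_2, W_3$), your argument goes through, but as written the inductive step is not justified.
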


\begin{proof}

By  \cite[Lemma 6.12]{zink68cartier}, the first slope of $\prescript{k}{}{X_{(a,b)}}$ is equal to
$$ \lim_{n \to \infty} \tfrac{1}{n}\max\{\ell \in \zz: F_k^n(\prescript{k}{}{M_{(a,b)}} ) \subseteq p^\ell ( \prescript{k}{}{M_{(a,b)}})\}.$$
Set $s_n = \tfrac{1}{n}\max\{\ell \in \zz: F_k^n( \prescript{k}{}{M_{(a,b)}} ) \subseteq p^\ell ( \prescript{k}{}{M_{(a,b)}})\}.$

Fix $m \geq 1$. Since even powers of $F_k$ take $M_0$ to $M_0$, there are scalars $w_i \in W(\kk)$ such that
$$F_k^{2am}(e_1) = \sum_{i=1}^g w_ie_i.$$
We seek to compute $\mathrm{ord}_p(w_{a+1})$. Let $F_k^{2am}(e_1)|_{e_{a+1}}$ denote the projection to the span of this basis vector. 

Note that $F_k = F + D$, where $F$ is the operator of Example \ref{ex:oddMab}, and $D$ is the operator extended $\sigma$-semilinearly from the conditions
\begin{align*}
D(f_a) &= p^k e_1, \\
D(e_{r+1}) &= -p^{k+1} f_{r+a+1},
\end{align*}
and $D(e_i) = 0$ for all $i \neq r+1$, and $D(f_i) = 0$ for all $i \neq a$.

To clearly understand these operators, the reader might find the diagram Figure \ref{fig:Fkdiag} below helpful. This diagram records the action of $F$ and $D$ on the basis vectors. The solid arrows give the action of $F$; for example, the arrow \begin{tikzcd}
e_{a+1} \arrow[r, "p"] &[-0.75em] f_{a+1}
\end{tikzcd} records the fact that $F(e_{a+1}) = p f_{a+1}$. The dashed arrows give the action of $D$. Most basis vectors have no dashed arrow emanating from them, reflecting the fact that most basis vectors are in the kernel of $D$.

The operators $F$ and $D$ do not commute, but we can still expand
\begin{align*}
F_k^{2am}(e_1) = (F+D)^{2am}(e_1) = \!\!\!\!\!\sum_{ W \in \mathcal{W}_{2am} } \!\!\!\!\!W(e_1), 
\end{align*}
where $\mathcal{W}_{2am}$ is the set of words of length $2am$ in the symbols $F$ and $D$. However, for most $W \in \mathcal{W}_{2am}$, $W(e_1)|_{e_{a+1}}$ is zero. 

It follows immediately from the definitions of $F$ and $D$ that $W(e_1)|_{e_{a+1}}$ is nonzero only when $W = F^{2a} W'$ for some word $W'$ of length $2a(m-1)$ in the symbols $F$ and $D$, and where $W'(e_1) = w' e_1$ for a nonzero scalar $w'$. 

Further, any such $W'$ must be a composition of the three operators
\begin{align*}
   W_1 &= DF^{2a-1}, \\
   W_2 &= F^{g-2a}DF^{2r}, \text{ and}\\
   W_3 &= F^{2g}.
\end{align*}

\begin{center}
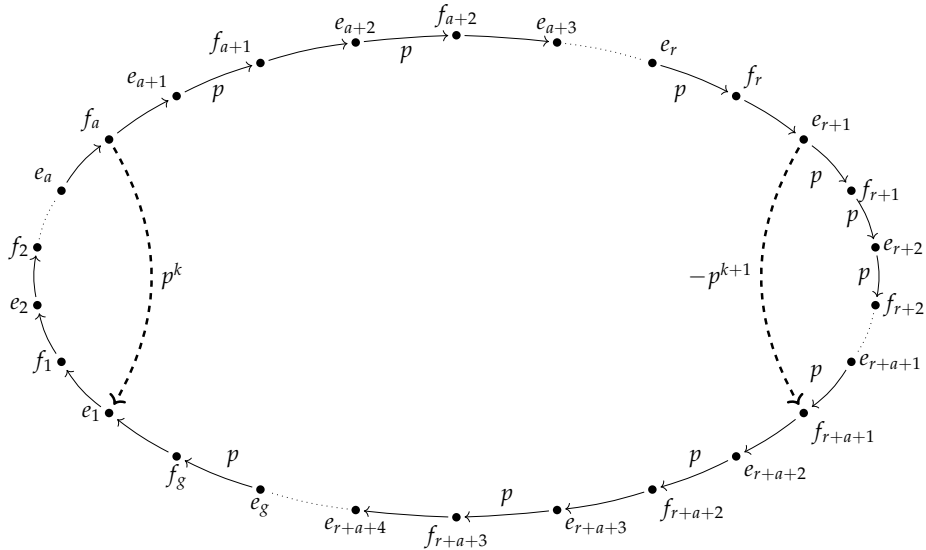
\begin{figure}[h!]
\begin{adjustbox}{max width=.75\textwidth}
\begin{tikzpicture}

  \fill(-5.76,-2.27) circle (2pt);
  \node at (-5.76,-2.27) (e1) {};
  \node[anchor=east] at (-5.76,-2.27) {$e_1$};

 \fill(-6.55,-1.42)circle (2pt); 
 \node at (-6.55,-1.42) (f1) {}; 
 \node[anchor=east] at (-6.55,-1.42) {$f_{1}$};

 \fill(-6.95,-0.48) circle (2pt); 
 \node at (-6.95,-0.48) (e2) {}; 
 \node[anchor=east] at (-6.95,-0.48) {$e_{2}$};

 \fill(-6.95,0.48) circle (2pt);
 \node at (-6.95,0.48) (f2) {};
 \node[anchor=east] at (-6.95,0.48) {$f_{2}$};

 \fill(-6.55,1.42) circle (2pt); 
 \node at (-6.55,1.42) (ea) {}; 
 \node[anchor=south east] at (-6.55,1.42) {$e_{a}$};

 \fill(-5.76,2.27) circle (2pt); 
 \node at (-5.76,2.27) (fa) {}; 
 \node[anchor=south east] at (-5.76,2.27) {$f_{a}$};

 \fill(-4.64,2.99) circle (2pt);
 \node at (-4.64,2.99) (ea1) {};
 \node[anchor=south east] at (-4.64,2.99) {$e_{a+1}$};

 \fill(-3.25,3.54) circle (2pt); 
 \node at (-3.25,3.54) (fa1) {}; 
 \node[anchor=south east] at (-3.25,3.54) {$f_{a+1}$};

 \fill(-1.67,3.88) circle (2pt); 
 \node at (-1.67,3.88) (ea2) {}; 
 \node[anchor=south] at (-1.67,3.88) {$e_{a+2}$};

 \fill(0,4) circle (2pt); 
  \node at (0,4) (fa2) {}; 
 \node[anchor=south] at (0,4) {$f_{a+2}$};

 \fill(1.67,3.88) circle (2pt); 
  \node at (1.67,3.88) (ea3) {}; 
 \node[anchor=south] at (1.67,3.88) {$e_{a+3}$};

 \fill(3.25,3.54) circle (2pt); 
  \node at (3.25,3.54) (er) {}; 
 \node[anchor=south west] at (3.25,3.54) {$e_{r}$};

 \fill(4.64,2.99) circle (2pt); 
 \node at (4.64,2.99) (fr) {}; 
 \node[anchor=south west] at (4.64,2.99) {$f_{r}$};

 \fill(5.76,2.27) circle (2pt); 
  \node at (5.76,2.27) (er1) {}; 
 \node[anchor=south west] at (5.76,2.27) {$e_{r+1}$};

  \fill(6.55,1.42) circle (2pt);
  \node at (6.55,1.42) (fr1) {};
  \node[anchor=west] at (6.55,1.42) {$f_{r+1}$};

 \fill(6.95,0.48) circle (2pt); 
 \node at (6.95,0.48) (er2) {}; 
 \node[anchor=west] at (6.95,0.48) {$e_{r+2}$};

 \fill(6.95,-0.48) circle (2pt); 
 \node at (6.95,-0.48) (fr2) {}; 
 \node[anchor=west] at (6.95,-0.48) {$f_{r+2}$};

 \fill(6.55,-1.42)  circle (2pt); 
 \node at (6.55,-1.42)  (era1) {}; 
 \node[anchor=west] at (6.55,-1.42)  {$e_{r+a+1}$};

 \fill(5.76,-2.27) circle (2pt); 
 \node at (5.76,-2.27) (fra1) {}; 
 \node[anchor=north west] at (5.76,-2.27) {$f_{r+a+1}$};

 \fill(4.64,-2.99) circle (2pt); 
 \node at (4.64,-2.99) (era2) {}; 
 \node[anchor=north west] at (4.64,-2.99) {$e_{r+a+2}$};

 \fill(3.25,-3.54) circle (2pt);
 \node at (3.25,-3.54) (fra2) {}; 
 \node[anchor=north west] at (3.25,-3.54) {$f_{r+a+2}$};

 \fill(1.67,-3.88) circle (2pt);
 \node at (1.67,-3.88) (era3) {};
 \node[anchor=north west] at (1.67,-3.88) {$e_{r+a+3}$};

 \fill(0,-4) circle (2pt); 
 \node at (0,-4) (fra3) {}; 
 \node[anchor=north] at (0,-4) {$f_{r+a+3}$};

 \fill(-1.67,-3.88) circle (2pt); 
 \node at (-1.67,-3.88) (era4) {}; 
 \node[anchor=north] at (-1.67,-3.88) {$e_{r+a+4}$};

 \fill(-3.25,-3.54) circle (2pt); 
  \node at (-3.25,-3.54) (eg) {};
 \node[anchor=north]  at (-3.25,-3.54) {$e_{g}$};

 \fill(-4.64,-2.99) circle (2pt); 
 \node at (-4.64,-2.99) (fg) {};
 \node[anchor=north] at (-4.64,-2.99) {$f_{g}$};

 \path[->]
        (eg) edge[bend left=5]  node[anchor= west] {}   node[anchor= south west] {$p$} (fg);

 \path[->]
        (fg) edge[bend left=5]  node[anchor= west] {}  (e1);

 \path[->]
        (e1) edge[bend left=10]  node[anchor= west] {}  (f1);

 \path[->]
        (f1) edge[bend left=10]  node[anchor= north west] {}  (e2);

 \path[->]
        (e2) edge[bend left=10]  node[anchor=north  west] {}  (f2);

 \path[-]
        (f2) edge[dotted, bend left=10]   (ea);

 \path[->]
        (ea) edge[bend left=10]  node[anchor= north] {}  (fa);

 \path[->]
        (fa) edge[bend left=5]  node[anchor= north] {}  (ea1);

 \path[->]
        (ea1) edge[bend left=5]  node[anchor= north] {}  node[anchor= north] {$p$}  (fa1);

 \path[->]
        (fa1) edge[bend left=5]  node[anchor= north] {}  (ea2);

 \path[->]
        (ea2) edge[bend left=2]  node[anchor= north] {} node[anchor= north] {$p$}  (fa2);

 \path[->]
        (fa2) edge[bend left=2]  node[anchor= north] {}  (ea3);

 \path[-]
        (ea3) edge[dotted, bend left=5]   (er);

 \path[->]
        (er) edge[bend left=5]  node[anchor= north] {} node[anchor= north east] {$p$}  (fr);

 \path[->]
        (fr) edge[bend left=5]  node[anchor= north east] {}  (er1);

 \path[->]
        (er1) edge[bend left=10]  node[anchor= east] {} node[anchor= north east] {$p$} (fr1);

 \path[->]
        (fr1) edge[bend left=10]  node[anchor=south east] {}  node[anchor=east] {$p$} (er2);
        
 \path[->]
        (er2) edge[bend left=10]  node[anchor=south east] {}  node[anchor=east] {$p$} (fr2);

 \path[-]
        (fr2) edge[dotted, bend left=10]   (era1);

 \path[->]
        (era1) edge[bend left=10]  node[anchor=south] {}   node[anchor=south east] {$p$}  (fra1);

 \path[->]
        (fra1) edge[bend left=5]  node[anchor=south] {}  (era2);

 \path[->]
        (era2) edge[bend left=5]  node[anchor=south] {} node[anchor=south] {$p$} (fra2);

 \path[->]
        (fra2) edge[bend left=5]  node[anchor=south] {}  (era3);

 \path[->]
      (era3) edge[bend left=2]  node[anchor=south] {$p$}     (fra3);

\path[->]
      (fra3) edge[bend left=2]  node[anchor=south] {}     (era4);

 \path[-]
        (era4) edge[dotted, bend left=5]   (eg);

 \path[->]
        (fa) edge[very thick, dashed, bend left=30]  node[anchor=south] {}  node[anchor=west] {$p^k$} (e1);

 \path[->]
        (er1) edge[very thick, dashed, bend right=30]  node[anchor=east] {$-p^{k+1}$}  node[anchor=north west] {} (fra1);

\end{tikzpicture}
\end{adjustbox}
\caption{$F$ and $D$ Operators}
  \label{fig:Fkdiag}
\end{figure} 
\end{center}

Note that $W_1$ is of length $2a$ and $W_1(e_1) = p^ke_1$, while $W_2$ is of length $2g-2a$ and $W_2(e_1) = -p^{k+g-2a}e_1$, and $W_3$ is of length $2g$, and $W_3(e_1) = p^g e_1$. Since 
$$\tfrac{k}{2a} < \tfrac{ k + g -2a}{2g-2a} < \tfrac{g}{2g},$$
the smallest $p$-adic valuation of $w'$ will be achieved when $W' = W_1^{m-1}$, and any other valid $W'$ will produce $w'$ with strictly larger $p$-adic valuation.

Therefore,
\begin{align*}
F_k^{2am}(e_1)|_{e_{a+1}} &= F^{2a}(W_1)^{m-1}(e_1) + \text{higher-valuation terms}    \\
&= p^{k(m-1)} e_{a+1} + \text{higher-valuation terms}.
\end{align*}

Then, for each $m \geq 1$  we have $s_{2am} \leq \frac{k(m-1)}{2am},$ and so $\lim_{n \rightarrow \infty} s_n \leq \frac{k}{2a}.$ Thus the first slope of $\prescript{k}{}{X_{(a,b)}}$ is at most $\frac{k}{2a}$, and in particular is not $\frac{1}{2}$.
\end{proof}

Now, we can combine the above Example \ref{ex:Mk} with the slope computation of Lemma \ref{lem:Mkslope} and the results on minimal height to understand both isogeny cutoffs and the supersingular isogeny bounds.

\begin{theorem}\label{thm:isogenycutoffs}
Fix nonnegative integers $a$ and $b$, with the convention that $a \leq b$.
Let $X$ be any supersingular unitary $p$-divisible group of signature $(a,b)$ over $\kk$. 

If $a < b$, then the isogeny cutoff $b_X$ of $X$ is at most $a+1$. If $a=b$, then the isogeny cutoff $b_X$ is at most $a$. Further, when $a < b$ we have the following constraints on the supersingular isogeny bounds:
$$a \leq \widetilde{B}_{(a,b)} \leq B_{(a,b)} \leq a+1$$
and when $a=b$, we have:
$$a-1 \leq \widetilde{B}_{(a,a)}  \leq B_{(a,a)} \leq a.$$

\end{theorem}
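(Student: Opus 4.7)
The plan is to combine the minimal-height bounds from Theorem \ref{thm:allabtminht} with the explicit deformations of Example \ref{ex:Mk}.

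For the upper bound $B_{(a,b)} \leq a+1$ (or $\leq a$ when $a=b$), I would first apply Theorem \ref{thm:allabtminht} to produce, for any supersingular unitary $X$ of signature $(a,b)$, an isogeny $\rho \colon X \to \uxx$ to a superspecial $\uxx$ whose kernel $K$ sits inside $X[p^q]$ with $q \leq a$ (resp.\ $q \leq a-1$). I would then argue that $b_X \leq q+1$: given any $p$-divisible group $X'$ with $X'[p^{q+1}] \cong X[p^{q+1}]$, transport $K$ under the isomorphism to a subgroup $K' \subseteq X'[p^q] \subseteq X'[p^{q+1}]$. Since $K' \subseteq X'[p^q]$, the $p$-torsion of the quotient $X'/K'$ is reconstructible from the pair $(X'[p^{q+1}], K')$ via $(X'/K')[p] = \{x \in X'[p^{q+1}] : p x \in K'\}/K'$. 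The same identification applied to $X$ gives $(X'/K')[p] \cong (X/K)[p] = \uxx[p]$, on which $F = V$ since $\uxx$ is superspecial. A short Nakayama-type argument (using $FV = VF = p$) then lifts $F = V$ from the $p$-torsion to the full Dieudonn\'e module of $X'/K'$, so $X'/K'$ is superspecial and $X'$ is supersingular, hence isogenous to $X$.

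For the lower bounds $\widetilde{B}_{(a,b)} \geq a$ and $\widetilde{B}_{(a,a)} \geq a-1$, Example \ref{ex:Mk} and Lemma \ref{lem:Mkslope} already do most of the work. When $a+b$ is odd and $a < b$, the non-supersingular $\prescript{a-1}{}{X_{(a,b)}}$ has $p^{a-1}$-torsion compatibly isomorphic to $X_{(a,b)}[p^{a-1}]$, giving $\widetilde{b}_{X_{(a,b)}} \geq a$. For $a+b$ even with $a < b$, I would take the product $\prescript{a-1}{}{X_{(a,b-1)}} \times_{\kk} X_{(0,1)}$ as a non-supersingular unitary companion of $X_{(a,b-1)} \times_{\kk} X_{(0,1)} \cong X_{(a,b)}$ sharing $p^{a-1}$-torsion. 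For $a = b \geq 3$ I would use $\prescript{a-2}{}{X_{(a-1,a)}} \times_{\kk} X_{(1,0)}$ against $X_{(a-1,a)} \times_{\kk} X_{(1,0)} \cong X_{(a,a)}$; the small cases $a = b \in \{1, 2\}$ reduce to the observation that $p^0$-torsion carries no information (so $\widetilde{B}_{(1,1)} \geq 0$ is automatic and $\widetilde{B}_{(2,2)} \geq 1$ follows from the existence of non-isogenous unitary groups of signature $(2,2)$).

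The main obstacle is the superspeciality-detection step in the upper bound, namely propagating the identity $F = V$ from $(X'/K')[p]$ to the full Dieudonn\'e module $M_{X'/K'}$. The cleanest form of this argument proceeds directly on Dieudonn\'e modules: since $FV = VF = p$, one has $pM_{X'/K'} \subseteq FM_{X'/K'} \cap VM_{X'/K'}$, so $FM_{X'/K'} + pM_{X'/K'} = VM_{X'/K'} + pM_{X'/K'}$ actually implies $FM_{X'/K'} = VM_{X'/K'}$; this bypasses any appeal to outside characterizations of superspecial $p$-divisible groups, and makes the passage from the bound on minimal heights in Section \ref{sec:heights} to the bound on isogeny cutoffs essentially mechanical.
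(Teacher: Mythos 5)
Your proof is correct, and the upper-bound half takes a genuinely different route from the paper. The paper invokes \cite[Lemma~6.3]{TraversoII} as a black box (``the isogeny cutoff is at most one more than the minimal height''); you re-derive the supersingular case directly on Dieudonn\'e modules. The functorial reconstruction $(X'/K')[p] = \{x \in X'[p^{q+1}] : px \in K'\}/K'$ is exactly the right statement (note $K' \subseteq X'[p^q]$ guarantees the numerator lands in $X'[p^{q+1}]$), and your closing observation is the crucial point: since $pN = FVN \subseteq FN$ and $pN = VFN \subseteq VN$ for any Dieudonn\'e module $N$, the mod-$p$ equality $FN + pN = VN + pN$ already forces $FN = VN$, so ``$(X'/K')[p] \cong \uxx[p]$'' implies $X'/K'$ is superspecial, hence $X'$ is supersingular and (having the same height) isogenous to $X$. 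This makes the passage from Theorem~\ref{thm:allabtminht} to the upper bounds self-contained; what the paper's citation buys instead is generality, since the cited lemma applies without a supersingularity hypothesis (though that extra generality is not used here). Your lower-bound argument matches the paper's: the same non-supersingular deformations $\prescript{k}{}{X_{(\cdot,\cdot)}}$ from Example~\ref{ex:Mk} and Lemma~\ref{lem:Mkslope}, split into the three parity/signature cases. You additionally flag the small-signature edge cases where $k = a-1$ or $k = a-2$ falls below the $k \geq 1$ hypothesis of Example~\ref{ex:Mk}; the paper leaves those trivial cases implicit, and your disposal of them (the lower bound is either vacuous or follows from the mere existence of non-isogenous unitary $p$-divisible groups of that signature) is correct.
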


\begin{proof}

We showed in Theorem \ref{thm:allabtminht} that any supersingular unitary $p$-divisible group of $X$ of signature $(a,b)$ has minimal height at most $a$ (when $a < b$) and minimal height at most $a-1$ (when $a=b$).

Lemma 6.3 of \cite{TraversoII} observes that the isogeny cutoff of any $p$-divisible group is at most one more than the minimal height, so $b_X \leq a+1$ (when $a< b$) and $b_X \leq a$ (when $a=b$).

Since the above bounds on $b_X$ hold for \emph{any} supersingular unitary $p$-divisible group of signature $(a,b)$, this gives the upper bounds on supersingular isogeny cutoffs: $B_{(a,b)} \leq a+1$ (when $a < b$) and $B_{(a,a)} \leq a$.

For the lower bounds, first consider the case when $a < b$ and $a+b$ is odd, and recall the $p$-divisible groups $X_{(a,b)}$ and $\prescript{a-1}{}{X_{(a,b)}}$ constructed in Examples \ref{ex:oddMab} and \ref{ex:Mk}, respectively.  Note that $X_{(a,b)}$ is supersingular but, by Lemma \ref{lem:Mkslope}, the $p$-divisible group $\prescript{a-1}{}{X_{(a,b)}}$ is not. However, 
$$X_{(a,b)}[p^{a-1}]  \cong  \prescript{a-1}{}{X_{(a,b)}}[p^{a-1}]$$ 
(in a way respecting actions and polarizations on both sides). So, for $X = X_{(a,b)},$
$$ a \leq \widetilde{b}_X  \quad \text{ and }  \quad a \leq b_X.$$
But then, by definition of the supersingular isogeny bounds, we must have
$$a \leq \widetilde{B}_{(a,b)}  \quad \text{ and } \quad a \leq B_{(a,b)}$$
and so $a \leq \widetilde{B}_{(a,b)} \leq B_{(a,b)} \leq a+1.$

Next, in the case where $a < b$ and $a+b$ is even, note that $a < b-1$, so we may consider $Y = X_{(a,b)} = X_{(a,b-1)} \times_\kk X_{(0,1)}$ of Example \ref{ex:evenMab} and $Y' = \prescript{a-1}{}{X_{(a,b-1)}}   \times_\kk X_{(0,1)}$. As in the previous case, $Y$ is supersingular, $Y'$ is not supersingular, and $Y[p^{a-1}] \cong Y'[p^{a-1}].$ Since $a \leq b_Y$,
$$a \leq \widetilde{B}_{(a,b)} \leq B_{(a,b)} \leq a+1.$$

Finally, when $a=b$, we consider $Y = X_{(a,a)} = X_{(a-1,a)} \times_\kk X_{(1,0)}$ of Example \ref{ex:Maa} and $Y' =   \prescript{a-2}{}{X_{(a-1,a)}}    \times_\kk X_{(1,0)}$. Since $Y$ is supersingular, $Y'$ is not supersingular, and $Y[p^{a-2}] \cong Y'[p^{a-2}],$ we have
$$a-1 \leq \widetilde{B}_{(a,a)} \leq B_{(a,a)} \leq a.\eqno\qedhere$$

\end{proof}

\begin{remark}
There are only a few values of $(a,b)$ for which the supersingular isogeny bound of signature $(a,b)$ is known.

As a degenerate example, for any $g > 0$, the signature $(0,g)$ unitary Shimura variety (when the prime $p$ is inert in the relevant field, and the level structure is hyperspecial at $p$) has a single Newton stratum (which is the supersingular locus).  As a result, every point has isogeny cutoff 0, so $B_{(0,g)}=\widetilde{B}_{(0,g)}=0$.

It is shown in \cite{bultel2006congruence} that unitary Shimura varieties of signature $(1,g-1)$ (with the same assumptions on $p$) have the property that the supersingular locus is a union of Ekedahl-Oort strata. As a result, when $g > 1$, we have $\widetilde{B}_{(1,g-1)} =1$, and a more careful use of \cite{bultel2006congruence} also shows that $B_{(1,g-1)} =1$.

Similarly, it is shown in \cite{GoertzHe} that unitary Shimura varieties of signature $(2,2)$ (again,  with the same assumptions on $p$) also have the property that the supersingular locus is a union of Ekedahl-Oort strata, and again in this case both $\widetilde{B}_{(2,2)}$ and $B_{(2,2)}$ are equal to 1.

One might be tempted to say, from these examples, that we always have $a = \widetilde{B}_{(a,b)} = B_{(a,b)}$ when $a < b$, and $a-1 = \widetilde{B}_{(a,a)} = B_{(a,a)}$ when $a=b$. However, it is important to note that all of the above examples come from Shimura varieties of Coxeter type, and most unitary Shimura varieties are not of Coxeter type. 

\end{remark}

\section{Applications to Abelian Varieties and Their Moduli Spaces}\label{sec:SVandBTn}

In this section we briefly translate Theorem \ref{thm:isogenycutoffs} into the language of the $\mathrm{BT}_m$ stratifications of unitary Shimura varieties.

Given the pair of nonnegative integers $(a,b)$ and a quadratic imaginary field $L$, let $\mathcal{M}(a,b)$ denote the geometric characteristic-$p$ fiber of the integral model of Kottwitz \cite{kottwitz1992points} of a unitary Shimura variety of signature $(a,b)$, under the assumptions that $p$ is inert in $L$ and that the level structure is hyperspecial at $p$.  This is a moduli space of prime-to-$p$ isogeny classes $(A, \iota, \lambda, \eta)$ of abelian varieties of $A$ dimension $a+b$, with action $\iota$ of $\mathcal{O}_L$, polarization $\lambda$, and level structure $\eta$, under some compatibility conditions and the requirement that the action is of ``signature $(a,b)$," analogous to Definition \ref{def:updiv}. (For details of the moduli problem, and the definition of this moduli space in terms of the Shimura datum, cf. \cite{Vollaard}.)

The \emph{Newton stratification} of $\mathcal{M}(a,b)$ is based on the isogeny classes of $p$-divisible groups. In particular, two $\kk$-points $(A_1, \iota_1, \lambda_1, \eta_1)$ and $(A_2, \iota_2, \lambda_2, \eta_2)$ are in the same Newton stratum if and only if $A_1$ and $A_2$ have the same Newton polygon. There is a unique closed Newton stratum, the \emph{supersingular locus} $\mathcal{M}(a,b)^{\mathrm{ss}}$, which parametrizes \emph{supersingular} abelian varieties with extra structure. 

For the \emph{Ekedahl-Oort stratification} of $\mathcal{M}(a,b)$,  two $\kk$-points $(A_1, \iota_1, \lambda_1, \eta_1)$ and $(A_2, \iota_2, \lambda_2, \eta_2)$ are in the same Ekedahl-Oort stratum if and only if $A_1[p]$ and $A_2[p]$, equipped with their induced action and polarization, are isomorphic. 

Generalizing the Ekedahl-Oort stratification is the \emph{$\mathrm{BT}_m$ stratification} of $\mathcal{M}(a,b)$: for $m =1$, this is exactly the Ekedahl-Oort stratification. In general, the $\mathrm{BT}_m$ stratification (defined for Hodge type Shimura varieties by Vasiu in \cite{vasiu2006level}) is defined by the property that two $\kk$-points are in the same $\mathrm{BT}_m$ stratum if and only if the group schemes $A_1[p^m]$ and $A_2[p^m]$ are isomorphic, with their induced action and polarization. The reader should note that this is a stratification in the sense of \cite[Remark 2.1.1]{Vasiubounded}; in particular, for $m$ large enough, there are infinitely many strata. Given a valid $p^m$-torsion group scheme $G$ with action and polarization, we'll denote the $\mathrm{BT}_m$ stratum defined by $G$ as $\mathcal{M}(a,b)_G$.

The corollary below follows from applying Theorem \ref{thm:isogenycutoffs} in the language of stratifications of $\mathcal{M}(a,b)$.
\begin{corollary}\label{cor:containedordisjoint}
Fix nonnegative integers $a$ and $b$, with the convention that $a \leq b$. Let $m \geq a+1$ if $a < b$ and $m \geq a$ if $a=b$.

For each $\mathrm{BT}_m$ stratum $\mathcal{M}(a,b)_G$, and for any algebraically closed field $\kk$, either
$$\mathcal{M}(a,b)_G (\kk) \subseteq \mathcal{M}(a,b)^{\mathrm{ss}}(\kk)$$
or 
$$\mathcal{M}(a,b)_G (\kk) \cap \mathcal{M}(a,b)^{\mathrm{ss}}(\kk) = \emptyset.$$
\end{corollary}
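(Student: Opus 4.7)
The plan is to observe that the dichotomy follows essentially formally from the isogeny cutoff bounds in Theorem \ref{thm:isogenycutoffs} together with the fact that supersingularity is an isogeny-invariant property of the underlying $p$-divisible group. The strategy is to show that if a $\mathrm{BT}_m$ stratum meets the supersingular locus, then every point of that stratum is supersingular.

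First, I would assume $\mathcal{M}(a,b)_G(\kk) \cap \mathcal{M}(a,b)^{\mathrm{ss}}(\kk)$ is nonempty and fix a supersingular point $(A_1, \iota_1, \lambda_1, \eta_1)$ of this intersection. Write $X_1 = A_1[p^\infty]$, a supersingular unitary $p$-divisible group of signature $(a,b)$. Then for any other $(A_2, \iota_2, \lambda_2, \eta_2) \in \mathcal{M}(a,b)_G(\kk)$, setting $X_2 = A_2[p^\infty]$, the defining property of the $\mathrm{BT}_m$ stratification gives an isomorphism $X_1[p^m] \cong X_2[p^m]$ (respecting the induced $\mathcal{O}_L$-action and polarization, but a fortiori an isomorphism of bare truncated Barsotti-Tate groups).

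Next, I would invoke Theorem \ref{thm:isogenycutoffs}: the isogeny cutoff $b_{X_1}$ is at most $a+1$ when $a < b$, and at most $a$ when $a = b$. In either case the hypothesis on $m$ guarantees $m \geq b_{X_1}$, so the isomorphism $X_1[p^m] \cong X_2[p^m]$ forces the existence of an isogeny $X_1 \to X_2$. Isogenous $p$-divisible groups have the same Newton polygon, so $X_2$ is supersingular, whence $A_2$ is a supersingular abelian variety and $(A_2, \iota_2, \lambda_2, \eta_2) \in \mathcal{M}(a,b)^{\mathrm{ss}}(\kk)$. Therefore $\mathcal{M}(a,b)_G(\kk) \subseteq \mathcal{M}(a,b)^{\mathrm{ss}}(\kk)$, yielding the dichotomy.

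There is no real obstacle here beyond being careful about the definition of $b_X$. The only subtlety is that the $\mathrm{BT}_m$ stratification is defined by isomorphisms of $p^m$-torsion that respect the unitary structure, whereas $b_X$ (as opposed to $\widetilde{b}_X$) only requires bare isomorphisms. This direction of comparison is the easy one: a structured isomorphism is in particular an unstructured isomorphism, so the bound on $b_{X_1}$ from Theorem \ref{thm:isogenycutoffs} applies directly.
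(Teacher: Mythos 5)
Your proof is correct and follows essentially the same route as the paper. The only cosmetic difference is that the paper invokes the enhanced isogeny cutoff $\widetilde{b}_{X_1}$ (via $\widetilde{B}_{(a,b)} \leq m$) and uses the structured isomorphism $X_1[p^m] \cong X_2[p^m]$ directly, whereas you discard the unitary structure and invoke the ordinary cutoff $b_{X_1}$ (via $B_{(a,b)} \leq m$). Since Theorem \ref{thm:isogenycutoffs} bounds both $B_{(a,b)}$ and $\widetilde{B}_{(a,b)}$ by the same quantity and a structured isomorphism is a fortiori an unstructured one, both versions of the argument go through; your remark at the end correctly identifies that this is the harmless direction of comparison.
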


\begin{proof}
We will show that, when $\mathcal{M}(a,b)_G (\kk) \cap \mathcal{M}(a,b)^{\mathrm{ss}}(\kk)$ is nonempty, we have the containment $\mathcal{M}(a,b)_G (\kk) \subseteq \mathcal{M}(a,b)^{\mathrm{ss}}(\kk)$.

Consider a $\kk$-point $(A_1, \iota_1, \lambda_1, \eta_1)$ in $\mathcal{M}(a,b)_G (\kk) \cap \mathcal{M}(a,b)^{\mathrm{ss}}(\kk)$, and let $(A_2, \iota_2, \lambda_2, \eta_2)$ be any other point of $\mathcal{M}(a,b)_{G}$. Let $(X_1, \iota_1, \lambda_1)$ and $(X_2, \iota_2, \lambda_2)$ be the unitary $p$-divisible groups of signature $(a,b)$ defined from these points. 

By the assumption that these two points both lie in the same $\mathrm{BT}_m$ stratum $\mathcal{M}(a,b)_{G}$, we have that $X_1[p^m]$ and $X_2[p^m]$ are isomorphic (with their induced action and polarization). Theorem \ref{thm:isogenycutoffs} tells us that $\widetilde{B}_{(a,b)} \leq m$. So, in particular, we have $\widetilde{b}_{X_1} \leq m$. 

Since $X_1[p^m] \cong X_2[p^m]$ and $\widetilde{b}_{X_1} \leq m$, the $p$-divisible groups $X_1$ and $X_2$ are isogenous, and so the point $(A_2, \iota_2, \lambda_2, \eta_2)$ is also contained in the supersingular locus $\mathcal{M}(a,b)^{\mathrm{ss}}$.
\end{proof}

\begin{corollary}\label{cor:union}
Fix nonnegative integers $a$ and $b$ with the convention that $a \leq b$. Let $m \geq a+1$ if $a < b$ and $m\geq a$ if $a=b$.

There exists a set $\mathcal{G}$ of $p^m$-torsion group schemes $G$ (equipped with action and polarization), each with corresponding $\mathrm{BT}_m$ stratum $\mathcal{M}(a,b)_G$, such that
$$\bigsqcup_{G \in \mathcal{G} } \mathcal{M}(a,b)_G = \mathcal{M}(a,b)^{\mathrm{ss}}.$$
\end{corollary}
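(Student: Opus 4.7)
The plan is to deduce Corollary \ref{cor:union} directly from Corollary \ref{cor:containedordisjoint}, by simply choosing $\mathcal{G}$ to be the set of all $p^m$-torsion group schemes (with action and polarization) whose $\mathrm{BT}_m$ stratum meets the supersingular locus. The bulk of the work has already been carried out in proving the dichotomy of Corollary \ref{cor:containedordisjoint}; what remains is essentially a bookkeeping argument.

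More precisely, I would define
\[
\mathcal{G} \;=\; \bigl\{\, G \;:\; \mathcal{M}(a,b)_G(\kk) \cap \mathcal{M}(a,b)^{\mathrm{ss}}(\kk) \neq \emptyset \,\bigr\},
\]
where $G$ ranges over all $p^m$-torsion group schemes with action and polarization compatible with signature $(a,b)$. By Corollary \ref{cor:containedordisjoint}, the hypothesis on $m$ ensures that for each $G \in \mathcal{G}$ we have $\mathcal{M}(a,b)_G(\kk) \subseteq \mathcal{M}(a,b)^{\mathrm{ss}}(\kk)$, which immediately gives the containment $\bigcup_{G \in \mathcal{G}} \mathcal{M}(a,b)_G \subseteq \mathcal{M}(a,b)^{\mathrm{ss}}$.

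For the reverse containment, I would observe that the $\mathrm{BT}_m$ stratification covers all of $\mathcal{M}(a,b)$: every $\kk$-point $(A,\iota,\lambda,\eta)$ defines a $p^m$-torsion group scheme $A[p^m]$ (with its induced action and polarization), and hence lies in the stratum $\mathcal{M}(a,b)_{A[p^m]}$. If $(A,\iota,\lambda,\eta)$ is supersingular, then by definition of $\mathcal{G}$ we have $A[p^m] \in \mathcal{G}$, placing this point in the union on the left. Disjointness of the union is automatic from the definition of the $\mathrm{BT}_m$ stratification: two distinct (isomorphism classes of) group schemes $G$ and $G'$ yield disjoint strata $\mathcal{M}(a,b)_G$ and $\mathcal{M}(a,b)_{G'}$.

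There is no real obstacle here; the only subtlety worth flagging is the implicit passage from the $\kk$-point statement of Corollary \ref{cor:containedordisjoint} to the set-theoretic equality of the strata themselves, which is handled by letting $\kk$ run over all algebraically closed fields of characteristic $p$ and noting that the strata $\mathcal{M}(a,b)_G$ and $\mathcal{M}(a,b)^{\mathrm{ss}}$ are determined by their $\kk$-points in this sense.
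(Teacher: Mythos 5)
Your proposal is correct and takes essentially the same approach as the paper: choose $\mathcal{G}$ to be the strata meeting $\mathcal{M}(a,b)^{\mathrm{ss}}$, use Corollary \ref{cor:containedordisjoint} for one inclusion and coverage by the $\mathrm{BT}_m$ stratification for the other, and then pass from pointwise to scheme-theoretic equality via reducedness. The paper's proof is just a one-line version of exactly this argument.
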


\begin{proof}
This follows from Corollary \ref{cor:containedordisjoint} and the fact that the supersingular locus $\mathcal{M}(a,b)^{\mathrm{ss}}$ and the $\mathrm{BT}_m$ strata are defined as reduced subschemes of $\mathcal{M}(a,b)$. 
\end{proof}

\newpage
\renewcommand\bibpreamble{\vspace*{-0.1\baselineskip}}

\bibliographystyle{alpha}
\bibliography{Bibliography.bib}

\end{document}